\newtheorem{theorem}{Theorem}
\newtheorem{lemma}{Lemma}
\newtheorem{proposition}[theorem]{Proposition}
\newtheorem{corollary}{Corollary}[theorem]
\theoremstyle{definition}
\newtheorem{definition}{Definition}
\newtheorem{remark}{Remark}
\newtheorem{example}{Example}
\newcommand{\Z}{\mathbb{Z}}
\newcommand{\R}{\mathbb{R}}
\newcommand{\C}{\mathbb{C}}
\newcommand{\Q}{\mathbb{Q}} 
\newcommand{\lie}[1]{\mathfrak{#1}}
\begin{document}

\title{Cohomology of Quotients in Real Symplectic Geometry}

\author{Thomas John Baird, Nasser Heydari}

\maketitle

\begin{abstract}
Given a Hamiltonian system $ (M,\omega, G,\mu) $ where $(M,\omega)$ is a symplectic manifold, $G$ is a compact  connected Lie group acting on $(M,\omega)$ with moment map $ \mu:M \rightarrow\mathfrak{g}^{*}$, then one may construct the symplectic quotient $(M/\!\!/G, \omega_{red})$ where $M/\!\!/G := \mu^{-1}(0)/G$. Kirwan used the norm-square of the moment map, $|\mu|^2$, as a G-equivariant Morse function on $M$ to derive formulas for the rational Betti numbers of $M/\!\!/G$.

A real Hamiltonian system  $(M,\omega, G,\mu, \sigma, \phi) $  is a Hamiltonian system  along with a pair of involutions  $(\sigma:M \rightarrow M, \phi:G \rightarrow G) $  satisfying certain compatibility conditions. These imply that the fixed point set $M^{\sigma}$ is a Lagrangian submanifold of $(M,\omega)$ and that $M^{\sigma}/\!\!/G^{\phi} := (\mu^{-1}(0) \cap M^{\sigma})/G^{\phi}$ is a Lagrangian submanifold of $(M/\!\!/G, \omega_{red})$. In this paper we prove analogues of Kirwan's Theorems that can be used to calculate the $\Z_2$-Betti numbers of $M^{\sigma}/\!\!/G^{\phi} $. In particular, we prove (under appropriate hypotheses) that $|\mu|^2$ restricts to a $G^{\phi}$-equivariantly perfect Morse-Kirwan function on $M^{\sigma}$ over $\Z_2$ coefficients, describe its critical set using explicit real Hamiltonian subsystems, prove equivariant formality for $G^{\phi}$ acting on $M^{\sigma}$, and combine these results to produce formulas for the $\Z_2$-Betti numbers of $M^{\sigma}/\!\!/G^{\phi}$. 
\end{abstract}




\section{Introduction}

\subsection{Motivation and Goals}

A   \textit{Hamiltonian system}  $\mathcal{H}=(M,\omega,G,\mu) $ consists of a Lie group $G$ with Lie algebra $ \mathfrak{g}$ acting on a  symplectic manifold  $(M,\omega)$ via a moment map  $ \mu:M \rightarrow \mathfrak{g}^{*} $. We will always assume that $M$ is connected, that $G$ is both compact and connected and identify $\lie{g} \cong \lie{g}^*$ using an invariant inner product. If $G$ acts freely on the zero level set $ M_{0}:=\mu^{-1}(0) $ then we may construct the \emph{symplectic quotient} $(M/\!\!/G,\omega_{\mathrm{red}})$, where $M/\!\!/G:= M_{0}/G$ is a smooth manifold with symplectic form $\omega_{red}$. 
   
 A \textit{real Hamiltonian system} $ \mathcal{RH}=(M,\omega,G,\mu,\sigma,\phi) $ is a Hamiltonian system $ (M,\omega,G,\mu) $ equipped with an anti-symplectic involution $ \sigma:M \rightarrow M$   ($ \sigma^{2}=\mathrm{Id}_M $ and $ \sigma^{*}\omega=-\omega $) and a Lie group automorphism $ \phi:G \rightarrow G $ of order two ($\phi^2 = \mathrm{Id}_G$)   satisfying certain compatibility conditions (see Definition \ref{def3.try}). These imply that the \emph{real locus} $  M^{\sigma}:=\{ x\in M\ |\ \sigma(x)=x\} $ is a Lagrangian submanifold of $M$ and that the \textit{real subgroup} $  G^{\phi}:=\{ g\in G\ |\ \phi(g)=g \} $ restricts to an action  on $  M^{\sigma}$. If $G$ acts freely on $ M_{0} $, then the \textit{real quotient} $  M^{\sigma}/\!\!/G^{\phi}:=M^{\sigma}_{0}/G^{\phi}$ embeds as a Lagrangian submanifold in the symplectic quotient $M/\!\!/G$. The goal of this paper is to develop Morse theory techniques to calculate the $\Z_2$-Betti numbers of the real quotient $M^{\sigma}/\!\!/G^{\phi} $ in analogy with Kirwan's techniques from \cite{KIR}.
 
In the special case when $G$ is a torus and $\phi(g) = g^{-1}$, a real analogue of Kirwan's equivariant perfection was proven by Goldin-Holm \cite{GH}, and a real analogue of equivariant formality was proven by Biss-Guillemin-Holm \cite{BGH}, building on work of Duistermaat \cite{DUI}. Our paper extends these results to non-abelian real Hamiltonian systems.

Equivariant perfection for non-abelian real Hamiltonian systems has been used in topological gauge theory by Liu-Schaffhauser \cite{LS} and by the first author \cite{BAI} to study moduli spaces of real vector bundles over a real curve. Those papers use a real structure on the Atiyah-Bott \cite{AB2} Yang-Mills Hamiltonian system.

 \subsection{Kirwan's Theorems}
 
 Given an  invariant inner product on the Lie algebra $ \mathfrak{g} $, one can form the norm-square of the moment map
   \begin{equation*}
    f=|\mu|^{2}:M \rightarrow \mathbb{R}.
    \end{equation*}  
Suppose that $f$ is proper. Kirwan \cite{KIR} showed that $f$ is minimally degenerate (or Morse-Kirwan) and used this to derive formulas for the equivariant cohomology. In particular, the critical set $ C_{f} $ of $f$ is a disjoint union of $G$-invariant closed subsets $ \{C_{\beta}\ |\ \beta\in \Lambda\}$ and the function $f$ is $G$-equivariantly perfect over the field of the rational numbers $ \mathbb{Q} $; i.e.
\begin{equation*}
  \mathbf{P}_t^{G}\boldsymbol{(}M;\mathbb{Q}\boldsymbol{)}=\sum_{\beta \in \Lambda} t^{2d_\beta} \mathbf{P}_t^{G}\boldsymbol{(}C_{\beta};\mathbb{Q}\boldsymbol{)},
 \end{equation*} 
where we define the equivariant Poincar\'{e} series relative to a field $ \mathbb{F} $ to be the generating function   $$\mathbf{P}_t^{G}\boldsymbol{(}M,t;\mathbb{F}\boldsymbol{)}:= \sum_{i=0}^{\infty} \dim(H^i_G(M;\mathbb{F})) t^i$$   and $ 2d_\beta $ is the Morse index of $f$ along $ C_{\beta}$ which is necessarily even. If we assume for simplicity that the Morse index is constant along $C_{\beta}$, then the index set $ \Lambda \subset \lie{g}$ is a discrete subset of the Lie algebra and $C_0 = M_0$ is the global minimum.  
  
If additionally $M$ is compact, then Kirwan also proved that  $M$ is $G$-equivariantly formal over the field of the rational numbers $ \mathbb{Q} $; i.e, the Serre spectral sequence of the  fibration $ M \hookrightarrow M_{G}\rightarrow BG $, induced by the homotopy quotient space $ M_{G} $, collapses at page two and thus
\begin{equation*}
 H^{*}_{G}(M;\mathbb{Q})\cong H^{*}(BG;\mathbb{Q}) \otimes_{\mathbb{Q}} H^{*}(M;\mathbb{Q}),
 \end{equation*} 
where $ BG $ is the classifying space of the   Lie group  $G$. Combining these results yields a formula for the rational equivariant Betti numbers of the zero level set $ M_0 := \mu^{-1}(0)$: 

\begin{equation}\label{equ1.orb}
\mathbf{P}_t^G\boldsymbol{(}M_0;\mathbb{Q}\boldsymbol{)}=\mathbf{P}_t\boldsymbol{(}M;\mathbb{Q}\boldsymbol{)} \mathbf{P}_t\boldsymbol{(}BG;\mathbb{Q}\boldsymbol{)}-\sum_{\beta \neq 0} t^{2d_\beta}\mathbf{P}_t^G\boldsymbol{(}C_{\beta};\mathbb{Q}\boldsymbol{)},
\end{equation}
where   $ C_{0}=M_0 $. If $G$ acts freely on the zero level set $ M_{0} $, then $ H_{G}^{*}(M_{0};\mathbb{Q}) = H^{*}(M/\!\!/G;\mathbb{Q}) $, so (\ref{equ1.orb}) can be used to compute the Betti numbers of $M/\!\!/G$.

Kirwan showed that for each $ \beta\in \Lambda $, there is a Hamiltonian subsystem  $  \mathcal{H}_{\beta}=(Z_{\beta},\omega,G_{\beta},\mu_{\beta}) $ in which $ Z_{\beta}\subset M $ is a   symplectic  submanifold, $ G_{\beta} \leq G $ is the stabilizer  subgroup of $\beta $  and  $ C_{\beta}=G \times_{G_{\beta}}   M_{\beta}$ where $ M_{\beta}=\mu_{\beta}^{-1}(0)  $.  This implies $ \mathbf{P}_t^{G_{\beta}}\boldsymbol{(}M_\beta;\mathbb{Q}\boldsymbol{)} =  \mathbf{P}_t^G\boldsymbol{(}C_{\beta};\mathbb{Q}\boldsymbol{)}$ so we obtain the formula

\begin{equation*}
\mathbf{P}_t^G\boldsymbol{(}M_0;\mathbb{Q}\boldsymbol{)}=\mathbf{P}_t\boldsymbol{(}M;\mathbb{Q}\boldsymbol{)} \mathbf{P}_t\boldsymbol{(}BG;\mathbb{Q}\boldsymbol{)}-\sum_{\beta \neq 0} t^{2d_\beta}\mathbf{P}_t^{G_{\beta}}\boldsymbol{(}M_\beta;\mathbb{Q}\boldsymbol{)}.
\end{equation*}
which is recursive in the dimension of $M_\beta$.

\subsection{Summary of Results}
Consider a real Hamiltonian system $ \mathcal{RH}=(M,\omega,G,\mu,\sigma,\phi) $ where $G$ is compact and connected and $f = |\mu|^2$ is proper. Consider the restricted function $ f^{\sigma} := f|_{M^{\sigma}}:M^{\sigma} \rightarrow \mathbb{R} $. We construct a Morse stratification $$ M^{\sigma} = \bigcup_{I \in \mathcal{I}} S^{\sigma}_{I}$$  such that 

\begin{itemize}
\item[(i)]  Each stratum $S^{\sigma}_{I} \subseteq M^{\sigma}$ is a $ G^{\phi}$-invariant locally closed submanifold of constant codimension $d_I$  that equivariantly deformation retracts onto its critical subset $ C^{\sigma}_{I} = C_{f^{\sigma}} \cap S^{\sigma}_I $.

\item[(ii)] There are real Hamiltonian subsystems $\mathcal{RH}_{I}=(Z_{I},\omega,G_{I},\mu_{I},\sigma,\phi)$ such that 
\begin{equation*}
C_{I}^{\sigma}\cong G^{\phi}\times_{G^{\phi}_{I}}  M_I
\end{equation*}
where $M_I := \mu^{-1}_I(0) \subseteq Z_I$.

\end{itemize}

In \S \ref{perfectsect}, we consider equivariant Thom-Gysin  long exact sequence 
\begin{equation*}
\cdots \rightarrow H_{G^{\phi}}^{*-d_{I}}(S^{\sigma}_{I};\mathbb{Z}_{2}) \xrightarrow{i_{I}}  H_{G^{\phi}}^{*}(\bigcup _{J \leq I} S^{\sigma}_{J};\mathbb{Z}_{2}) \rightarrow H_{G^{\phi}}^{*}(\bigcup _{J < I} S^{\sigma}_{J};\mathbb{Z}_{2}) \rightarrow \cdots
\end{equation*}
using an appropriate total order on $\mathcal{I}$. We show that if 
\begin{itemize}
\item[(i)]  $(G, \phi)$ has the free extension property (see \S \ref{freeextsect}), and
\item [(ii)] $\mathcal{RH}$ is $2$-primitive (see \S \ref{perfectsect})
\end{itemize}
then the top equivariant Stiefel-Whitney class of the $G^{\phi}$-equivariant normal bundle of each stratum $ S^{\sigma}_{I} $ is not a zero divisor. This forces $ i_{I} $ to be injective and breaks the long exact sequence into short exact sequences:
 \begin{equation}\label{equ1.vip}
0 \rightarrow H_{G^{\phi}}^{*-d_{I}}(S^{\sigma}_{I};\mathbb{Z}_{2}) \xrightarrow{i_{I}}  H_{G^{\phi}}^{*}(\bigcup _{J \leq I} S^{\sigma}_{J};\mathbb{Z}_{2}) \rightarrow H_{G^{\phi}}^{*}(\bigcup _{J < I} S^{\sigma}_{J};\mathbb{Z}_{2}) \rightarrow 0.
\end{equation}
By induction, this implies that $ f^{\sigma} $ is equivariantly perfect over  the field $ \mathbb{Z}_{2} $; i.e.,
\begin{equation*}
  \mathbf{P}_t^{G^{\phi}}\boldsymbol{(}M^{\sigma}\boldsymbol{)} =\sum_{I} t^{d_{I}} \mathbf{P}_t^{G^{\phi}}\boldsymbol{(}S^{\sigma}_{I}\boldsymbol{)} =  \sum_{I} t^{d_{I}}  \mathbf{P}_t^{G^{\phi}_I}\boldsymbol{(}M^{\sigma}_{I}\boldsymbol{)} 
 \end{equation*} 
 Rearranging yields a recursive formula
 \begin{equation*} \mathbf{P}_t^{G^{\phi}}\boldsymbol{(}M_{0}^{\sigma}\boldsymbol{)}=\mathbf{P}_t^{G^{\phi}}\boldsymbol{(}M^{\sigma}\boldsymbol{)}
   -\sum_{I \neq 0} t^{d_{I}} \mathbf{P}_t^{G^{\phi}_{I}}\boldsymbol{(}M_I^\sigma\boldsymbol{)}.
 \end{equation*} 
and $ \mathbf{P}_t\boldsymbol{(}M^{\sigma}/\!\!/G^{\phi}\boldsymbol{)} = \mathbf{P}_t^{G^{\phi}}\boldsymbol{(}M_{0}^{\sigma}\boldsymbol{)}$ if the action of $G$ on $ M_{0} $ is free. 

In \S \ref{formalitysect} we prove that if $M^{\sigma}$ is compact and the pair $(G,\phi)$ satisfies a \textit{toral free extension property} (Definition \ref{def7.gay}), then $M^{\sigma}$ is an equivariantly formal $G^{\phi}$-space, meaning 
\begin{equation*}
 H^{*}_{G^{\phi}}(M^{\sigma};\mathbb{Z}_{2})\cong H^{*}(BG^{\phi};\mathbb{Z}_{2}) \otimes_{\mathbb{Z}_{2}} H^{*}(M^{\sigma};\mathbb{Z}_{2})
 \end{equation*} 
 and $\mathbf{P}_t^{G^\phi}\boldsymbol{(}M^\sigma\boldsymbol{)} = \mathbf{P}_t\boldsymbol{(}M^\sigma\boldsymbol{)}\mathbf{P}_t\boldsymbol{(}BG^\phi\boldsymbol{)}$.

In \S \ref{examplessect} we apply our results in a number of examples.

\textbf{Acknowledgements:} Thanks to Rebecca Goldin, Hari Kunduri, and Eduardo Martinez-Pedroza for helpful comments on an earlier draft. Thanks also to Jonathan Fischer for advice about Example \ref{ConstructTheGrass}. This research was supported by an NSERC Discovery Grant.

\section{Real Symplectic Geometry}

In this section we recall relevant notions in real symplectic geometry. We suggest Sjamaar \cite{SJA} for a nice introduction to this topic.

\subsection{Hamiltonian actions and symplectic quotients}

\begin{definition}\label{def2.jig}
Let $G$ be a connected Lie group with Lie algebra $ \mathfrak{g} $. Suppose that $G$  acts on a symplectic manifold $ (M,\omega) $. A map $ \mu:M \rightarrow \mathfrak{g}^{*} $ is called a \textbf{moment map} if the following are satisfied.
\begin{enumerate}
\item[(i)]  For any $ X\in \mathfrak{g} $, we have 
\begin{equation*}\label{equ2.lud}
 d \mu^{X}=\iota_{X^{\#}} \omega,
 \end{equation*}
 where $ \mu^{X}:M\rightarrow  \mathbb{R} $ is defined $ \mu^{X}(p)=\langle \mu(p), X\rangle $ and $ X^{\#} $  is the vector field on $M$ generated by the action of $G$; i.e.,
 \begin{equation*}\label{equ2.weu}
  X^{\#}(p)=\frac{d}{dt}\Big |_{t=0} \Big (\exp(tX).p \Big ). 
  \end{equation*}
  \item[(ii)] $ \mu $ is an equivariant map; i.e.,
  \begin{equation*}\label{equ2.woo}
   \mu(g.p)=\mathrm{Ad}^{*}_{g}\Big (\mu(p)\Big ),\ \forall g\in G, \forall p\in M.
   \end{equation*} 
\end{enumerate}
In this case, the tuple $ (M,\omega,G,\mu) $ is called a \textbf{Hamiltonian system}. 
\end{definition}

Let $ (M,\omega,G,\mu) $ be a Hamiltonian system where $G$ is a compact Lie group. Suppose that $ M_{0}=\mu^{-1}(0) $ is the zero level set of $\mu$ and $ i:M_{0} \hookrightarrow M $ is the inclusion map. Since $M_{0}$ is $G$-invariant,   $G$ acts on $M_{0}$. Denote the orbit space $M/\!\!G := M_{0}/G$ and let $q: M_0 \rightarrow M/\!\!G$ the quotient map.   

\begin{proposition}\label{pro2.meu}
If $M_{0}$ is nonempty and $G$ acts freely on $M_{0}$, then the orbit space $ M/\!\!/G $ is  a smooth  manifold with $ \dim M/\!\!/G=\dim M - 2 \dim G $ equipped with a symplectic structure $ \omega_{\mathrm{red}} $ defined by $ i^{*} \omega=q^{*} \omega_{\mathrm{red}} $. The pair $ (M/\!\!/G, \omega_{\mathrm{red}}) $ is called the \textbf{symplectic reduction} or \textbf{symplectic quotient} of $ (M,\omega, G,\mu)$.
\end{proposition}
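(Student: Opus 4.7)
The plan is to prove the classical Marsden-Weinstein reduction theorem in four logical steps: show $M_0$ is a smooth submanifold, pass to the quotient, define the 2-form, and verify it is symplectic.

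First I would show that $0$ is a regular value of $\mu$ on $M_0$, so that $M_0$ is a smooth closed submanifold of $M$ of codimension $\dim G$. The key observation uses condition (i) of Definition \ref{def2.jig}: for $p \in M_0$, $v \in T_pM$, and $X \in \lie{g}$, we have $\langle d\mu_p(v), X\rangle = \omega_p(X^{\#}(p), v)$. Hence $X$ annihilates the image of $d\mu_p$ if and only if $\omega_p(X^{\#}(p), \cdot) \equiv 0$ on $T_pM$, which by nondegeneracy of $\omega$ forces $X^{\#}(p) = 0$; since $G$ acts freely on $M_0$ this forces $X = 0$. Thus $d\mu_p$ is surjective. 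Once $M_0$ is a manifold, the hypothesis that $G$ acts freely on $M_0$, combined with compactness (hence properness) of the action, implies by the standard quotient-manifold theorem that $M/\!\!/G = M_0/G$ is a smooth manifold with $\dim M/\!\!/G = \dim M - \dim G - \dim G = \dim M - 2\dim G$, and that $q:M_0 \to M/\!\!/G$ is a principal $G$-bundle.

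Next I would construct $\omega_{\mathrm{red}}$. The formula $i^*\omega = q^*\omega_{\mathrm{red}}$ determines $\omega_{\mathrm{red}}$ uniquely because $q$ is a surjective submersion, provided the pullback $i^*\omega$ descends. Descent requires two things: $G$-invariance, which follows immediately from $G$-invariance of $\omega$; and that $i^*\omega$ annihilates vertical vectors for $q$, i.e. the tangent directions to $G$-orbits. For $p \in M_0$, $v \in T_pM_0 = \ker d\mu_p$, and $X \in \lie{g}$, the computation
\begin{equation*}
\omega_p(X^{\#}(p), v) = \langle d\mu_p(v), X\rangle = 0
\end{equation*}
shows exactly this. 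Closedness is automatic: $q^*d\omega_{\mathrm{red}} = di^*\omega = i^*d\omega = 0$, and since $q^*$ is injective on forms, $d\omega_{\mathrm{red}} = 0$.

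The main obstacle, and the only nontrivial step, is nondegeneracy of $\omega_{\mathrm{red}}$. Suppose $[v] \in T_{[p]}(M/\!\!/G)$ satisfies $\omega_{\mathrm{red}}([v], [w]) = 0$ for all $[w]$; lifting, this says $v \in T_pM_0$ with $\omega_p(v, w) = 0$ for all $w \in T_pM_0 = \ker d\mu_p$. The goal is to show $v$ lies in $\lie{g}^{\#}(p) := \{X^{\#}(p) : X \in \lie{g}\}$, since exactly these vectors project to $0$. Here I would invoke symplectic linear algebra: $v \in (\ker d\mu_p)^{\omega}$, and I claim $(\ker d\mu_p)^{\omega} = \lie{g}^{\#}(p)$. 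The inclusion $\lie{g}^{\#}(p) \subseteq (\ker d\mu_p)^{\omega}$ is the displayed computation above. For equality, $\dim (\ker d\mu_p)^{\omega} = \dim T_pM - \dim \ker d\mu_p = \dim G$ (by nondegeneracy of $\omega_p$), and $\dim \lie{g}^{\#}(p) = \dim G$ because the $G$-action on $M_0$ is free; hence the two subspaces coincide. This completes the proof that $\omega_{\mathrm{red}}$ is nondegenerate, and thus a symplectic form on $M/\!\!/G$.
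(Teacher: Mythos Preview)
Your proof is correct and follows the standard Marsden--Weinstein argument. The paper itself does not give a proof at all: it simply cites da Silva's \emph{Lectures on Symplectic Geometry}, Chapter 23. So your write-up is strictly more informative than the paper's, and the argument you present is essentially the one found in that reference (regular value via freeness, quotient-manifold theorem, basic form descends, nondegeneracy from $(\ker d\mu_p)^{\omega}=\lie{g}^{\#}(p)$).
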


\begin{proof}
 See \cite{DAS}, Chapter 23.
\end{proof}

\subsection{Real Structures on Symplectic Manifolds}

\begin{definition}\label{def3.orb}
Let $ (M,\omega) $ be a symplectic manifold. An \textbf{anti-symplectic involution}\index{anti-symplectic involution} on $M$ is a diffeomorphism $ \sigma:M \rightarrow M $ having the following properties.
\begin{enumerate}
\item[(1)] $ \sigma^{2}=\mathrm{Id}$.
\item[(2)]  $ \sigma^{*}\omega=-\omega $.
\end{enumerate}
We call $ (M,\omega,\sigma) $   a \textbf{real symplectic manifold}.  The fixed point set  $M^{\sigma}$ is a Lagrangian submanifold of $M$, called a \textbf{real Lagrangian} (see \cite{DUI}).
\end{definition}

\begin{example}\label{exa3.orb}
Equip $\mathbb{C}^n$ with the standard symplectic structure  $ \omega = \frac{\sqrt{-1}}{2} \sum_{i=1}^n dz_i \wedge d\bar{z}_i$. The involution $ \sigma: \mathbb{C}^{n} \rightarrow \mathbb{C}^{n} $ by $ \sigma(z_{1},\dots,z_{n})=(\overline{z}_{1},\dots,\overline{z}_{n}) $ which conjugates each coordinate is an anti-symplectic involution. The real locus is $\mathbb{R}^n$. \end{example}

\begin{example}\label{exatom}
The complex Grassmannian $\mathrm{Gr}_k(\C^n)$ equipped with the standard symplectic form (see Example \ref{ConstructTheGrass}) admits a real structure induced by the complex conjugation involution on $\C^n$ described in Example \ref{exa3.orb}.  The real locus is the real Grassmanian $\mathrm{Gr}_k(\R^n)$.
\end{example}

\begin{example}
Consider a smooth manifold $M$ and denote $ (x,\xi) $ for the coordinates on its cotangent bundle  $ T^{*}M $. Let $ \omega=\sum_{i} dx_{i}\wedge d\xi_{i} $ be the canonical symplectic form on $ T^{*}M $. Suppose there exists a smooth automorphism $ \sigma: M \rightarrow M $ of order two. This extends to an anti-symplectic involution $  \sigma: T^{*}M \rightarrow T^{*}M $ as follows:
\[ \sigma (x,\xi)=(\sigma(x),-\xi\circ d\sigma_{x}).\]
\end{example}

\begin{definition}\label{def3.try}
 A \textbf{real structure}\index{real structure} on a Hamiltonian system $ (M, \omega,G,\mu) $ is a pair of smooth maps $ \sigma: M\rightarrow M $ and $ \phi:G \rightarrow G $ such that the following are satisfied.
\begin{enumerate}
\item[(i)] $ \phi $ is a group involution; i.e., a group automorphism of order two.
\item[(ii)]   $ \sigma $ is an anti-symplectic involution.
\item[(iii)]   $ \sigma $ and $ \phi $ satisfy the following  \textbf{compatibility conditions}\index{compatible conditions}:
\begin{equation}\label{equ3.ali}
\sigma\circ g=\phi(g) \circ \sigma, \ \forall g\in G,\\
 \mu \circ \sigma=-\phi_{*} \circ  \mu.
 \end{equation} 
 Here, $ \phi_{*}=d\phi: \mathfrak{g} \rightarrow \mathfrak{g} $.
\end{enumerate}
We call $ (\sigma,\phi) $ a \textbf{real pair} for $\mathcal{I}$ and the tuple $ \mathcal{RH}=(M,\omega,G,\mu,\sigma,\phi) $ a \textbf{real Hamiltonian system}.
\end{definition}
It follows from Definition \ref{def3.try} the real subgroup  $ G^{\phi}$  restricts to an action on the real locus $ M^{\sigma}$ and on $M^{\sigma}_{0}:=\mu^{-1}(0) \cap M^{\sigma}$. 

\begin{definition}\label{def3.raw}
Let $\mathcal{RH}=(M,\omega,G,\mu,\sigma,\phi) $ be a real Hamiltonian system. The orbit space $ M^{\sigma}/\!\!/G^{\phi}:= M^{\sigma}_{0}/G^{\phi}$  is called the \textbf{real quotient}. 
\end{definition}

The following is due to Foth \cite{F}.

\begin{proposition}
Let $ \mathcal{RH}=(M,\omega,G,\mu,\sigma,\phi) $ be a real Hamiltonian system for which $G$ compact and connected and suppose $G$ acts freely on $M_0 := \mu^{-1}(0)$. Then
\begin{itemize}
\item The anti-symplectic involution  $ \sigma: M \rightarrow M $ descends to an anti-symplectic involution  $\sigma_{\mathrm{red}}:M/\!\!/G \rightarrow M/\!\!/G   $.
\item The real quotient $M^{\sigma}/\!\!/G^{\phi}$ embeds naturally in $M/\!\!/G$ as a union of path components of the real locus $(M/\!\!/G)^{\sigma_{\mathrm{red}}}$.
\end{itemize}
\end{proposition}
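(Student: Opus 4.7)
The plan is to treat the two bullets separately. The first is essentially a descent argument driven by the compatibility conditions of Definition \ref{def3.try}, while the second combines an orbit-level injectivity argument with a dimension count against the Lagrangian real locus $(M/\!\!/G)^{\sigma_{\mathrm{red}}}$.

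For the first bullet, I would start by observing that $\sigma$ preserves the zero level set: the condition $\mu \circ \sigma = -\phi_* \circ \mu$ forces $\mu(\sigma(x)) = 0$ whenever $\mu(x) = 0$. The equivariance-type identity $\sigma \circ g = \phi(g) \circ \sigma$ then shows that $\sigma$ carries each $G$-orbit in $M_0$ to a $G$-orbit, so $\sigma_{\mathrm{red}}([x]) := [\sigma(x)]$ is well defined on $M/\!\!/G$ and inherits $\sigma_{\mathrm{red}}^2 = \mathrm{Id}$. For the anti-symplectic property I would compute
\begin{equation*}
q^*(\sigma_{\mathrm{red}}^* \omega_{\mathrm{red}}) = (\sigma|_{M_0})^*(q^* \omega_{\mathrm{red}}) = (\sigma|_{M_0})^*(i^* \omega) = i^*(\sigma^* \omega) = -i^* \omega = -q^* \omega_{\mathrm{red}},
\end{equation*}
and invoke injectivity of $q^*$ on differential forms, which holds because $q$ is a surjective submersion.

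For the second bullet, the inclusion $M^{\sigma}_0 \hookrightarrow M_0$ descends to a smooth map $\iota: M^{\sigma}/\!\!/G^{\phi} \to M/\!\!/G$ whose image lies in $(M/\!\!/G)^{\sigma_{\mathrm{red}}}$ by construction. Injectivity of $\iota$ is the key algebraic point: if $x, y \in M^{\sigma}_0$ satisfy $y = gx$, then applying $\sigma$ and using $\sigma \circ g = \phi(g) \circ \sigma$ yields $y = \phi(g) x$, so $g^{-1}\phi(g)$ fixes $x$, and freeness of the $G$-action on $M_0$ forces $g \in G^{\phi}$.

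The step I expect to require the most care is upgrading this injective smooth map to an identification with a union of connected components of the real locus rather than merely a Lagrangian submanifold. Here I would use a dimension count. Differentiating $\mu \circ \sigma = -\phi_* \circ \mu$ at a point of $M^{\sigma}_0$ shows that $d\mu$ respects the eigenspace splittings of $\sigma$ on $TM$ and $\phi_*$ on $\mathfrak{g}$, sending $T M^{\sigma}$ into the $(-1)$-eigenspace $\mathfrak{p}$ of $\phi_*$, and that $d(\mu|_{M^{\sigma}})$ surjects onto $\mathfrak{p}$. Hence $0$ is a regular value of $\mu|_{M^{\sigma}}$ and
\begin{equation*}
\dim M^{\sigma}/\!\!/G^{\phi} = \tfrac{1}{2}\dim M - \dim \mathfrak{p} - \dim G^{\phi} = \tfrac{1}{2}\dim M - \dim G = \tfrac{1}{2}\dim M/\!\!/G,
\end{equation*}
matching $\dim (M/\!\!/G)^{\sigma_{\mathrm{red}}}$. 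Combined with injectivity, this makes $\iota$ an open embedding into the real locus. Closedness of its image follows from closedness of $M^{\sigma}_0$ in $M_0$ together with properness of the compact quotient $q$, so the image is clopen in $(M/\!\!/G)^{\sigma_{\mathrm{red}}}$, i.e.\ a union of its path components.
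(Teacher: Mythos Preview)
The paper does not supply its own proof of this proposition; it simply attributes the result to Foth \cite{F}. Your argument is the natural one and is essentially correct.

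One small point worth tightening: from injectivity of $\iota$ and equality of dimensions alone you cannot yet conclude that $\iota$ is an open embedding in the smooth sense; you also need $d\iota$ to be injective at every point. This follows from the same eigenspace splitting you already use for the dimension count. The kernel of $T_xM_0^{\sigma} \to T_{[x]}(M/\!\!/G)$ is $T_xM_0^{\sigma} \cap T_x(G\cdot x)$, and the compatibility relation gives $d\sigma_x(X^{\#}(x)) = (\phi_*X)^{\#}(x)$; combined with freeness of the $G$-action this forces any $X^{\#}(x)$ lying in $T_xM^{\sigma}$ to come from $X \in \mathfrak{g}_+$, so the kernel is exactly $T_x(G^{\phi}\cdot x)$. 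Hence $\iota$ is an injective immersion between manifolds of equal dimension, so a local diffeomorphism onto an open subset of $(M/\!\!/G)^{\sigma_{\mathrm{red}}}$, and your open--closed argument then finishes the proof.
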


The involution $ \phi: G \rightarrow G $ induces an involution $ \phi_{*}: \mathfrak{g} \rightarrow \mathfrak{g} $  determining an eigenspace decomposition 
\begin{equation}\label{equ3.fub}
\mathfrak{g}=\mathfrak{g}_{+}\oplus \mathfrak{g}_{-}.\\
\end{equation}
Clearly $ \mathfrak{g}_{+}=\mathrm{Lie}(G^{\phi}) $ and the adjoint action by $G^{\phi}$ on $\mathfrak{g}$ preserves the decomposion. A consequence of (\ref{equ3.ali}) is that $\mu$ sends $M^{\sigma}$ to $\mathfrak{g}_{-}$.

\begin{example}
The standard action of $\mathrm{U}(n+1)$ on $\mathbb{CP}^n$ is Hamiltonian with respect to the Fubini-Study form $\omega_{FS}$  and moment map $\mu_{FS}: \mathbb{CP}^n \rightarrow \lie{u}(n+1)$ by $$\mu_{FS}[z]=\frac{zz^{*}}{2\pi i |z|^{2}}   .$$ If $M \subseteq \mathbb{CP}^n$ is a non-singular projective variety which is invariant under the action of a connected subgroup $G \leq \mathrm{U}(n+1)$, then this action of $G$ on $M$ is Hamiltonian with respect to the restricted Fubini-Study form on $M$ and moment map $\mu$ obtained by composing $\mu_{FS}$ with the projection $\lie{u}(n+1) \rightarrow \lie{g}$. This action extends to an action by the complexification $G_{\C}$ on $M$ and the Kempf-Ness Theorem states that the Hamiltonian quotient $M/\!\!G$ is isomorphic to  a symplectic manifold with the GIT quotient $M/\!\!G_{\C}$.

Suppose now that $M$ and $G$ are invariant under the standard complex conjugations $\sigma$ of $\mathbb{CP}^n$ and $\phi$ of $\mathrm{GL}_n(\C)$. Then $(M,\omega, G, \mu, \phi, \sigma)$ is a real Hamiltonian system and the quotient $M^{\sigma}/G^{\phi}$ corresponds to a totally real Lagrangian submanifold of $M/\!\!G_{\C}$.
\end{example}

For any $ \beta \in \mathfrak{g} $, let $ \mathcal{O}^{-}_{\beta}=\lbrace \mathrm{Ad}_{g}\beta \ | \ g\in G^{\phi}\rbrace $ and $ \mathcal{O}_{\beta}=\lbrace \mathrm{Ad}_{g}\beta \ |\ g\in G\rbrace $ be the orbits of $ \beta$ with respect to the adjoint actions of $G^{\phi}$ and $G$, respectively.

\begin{lemma}\label{addedlemma}
 Let $G$ be a compact connected Lie group and $\phi \in \mathrm{Aut}(G)$ of order two. For every $G$-orbit $  \mathcal{O}_\beta \subseteq \mathfrak{g} $ the intersection $ \mathcal{O}_{\beta} \cap \mathfrak{g}_{-} $ is a union of finitely many $G^{\phi}$-orbits.
\end{lemma}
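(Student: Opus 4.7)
The plan is to realize $\mathcal{O}_\beta \cap \mathfrak{g}_-$ as the fixed point set of a smooth involution on the compact manifold $\mathcal{O}_\beta$, and then show that each $G^\phi$-orbit is a union of connected components of that fixed set.

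First I would introduce the $\R$-linear involution $\tau \colon \mathfrak{g} \to \mathfrak{g}$ defined by $\tau(X) = -\phi_*(X)$, whose fixed subspace is exactly $\mathfrak{g}_-$. If the intersection $\mathcal{O}_\beta \cap \mathfrak{g}_-$ is empty there is nothing to prove, so pick $\gamma \in \mathcal{O}_\beta \cap \mathfrak{g}_-$. Since $\phi$ is a group automorphism, $\tau(\mathrm{Ad}_g X) = \mathrm{Ad}_{\phi(g)} \tau(X)$, so $\tau(\mathcal{O}_\beta) = \mathcal{O}_{\tau(\beta)}$; as $\gamma$ lies in both orbits they coincide. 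Thus $\tau$ restricts to a smooth involution on the compact manifold $\mathcal{O}_\beta$ whose fixed set is precisely $\mathcal{O}_\beta \cap \mathfrak{g}_-$. By Bochner linearization, the fixed set of a smooth involution is locally a submanifold, so being a closed subset of a compact manifold it has only finitely many connected components.

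Next I would check that each $G^\phi$-orbit in the intersection is open (closedness is automatic from compactness of $G^\phi$). Since $\tau$ is the restriction of a linear map, $d\tau_\gamma$ acts on $T_\gamma \mathcal{O}_\beta = [\mathfrak{g}, \gamma]$ as $\tau$ itself, so the tangent space to the fixed set at $\gamma$ equals $[\mathfrak{g}, \gamma] \cap \mathfrak{g}_-$. Openness therefore reduces to the identity
\begin{equation*}
[\mathfrak{g}_+, \gamma] \;=\; [\mathfrak{g}, \gamma] \cap \mathfrak{g}_-.
\end{equation*}
Using that $\phi_*$ is a Lie algebra automorphism with $\phi_*(\gamma) = -\gamma$, a direct computation gives $[\mathfrak{g}_+, \gamma] \subseteq \mathfrak{g}_-$ and $[\mathfrak{g}_-, \gamma] \subseteq \mathfrak{g}_+$. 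Writing an arbitrary $Y = Y_+ + Y_-$ with $Y_{\pm} \in \mathfrak{g}_{\pm}$ relative to (\ref{equ3.fub}), the requirement $[Y, \gamma] \in \mathfrak{g}_-$ then forces the $\mathfrak{g}_+$-component $[Y_-, \gamma]$ to vanish, yielding the nontrivial inclusion.

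Combining the two steps, each $G^\phi$-orbit in $\mathcal{O}_\beta \cap \mathfrak{g}_-$ is both open and closed in the fixed set of $\tau|_{\mathcal{O}_\beta}$, hence is a union of connected components; since there are only finitely many components, the lemma follows. The main (and essentially the only) substantive step is the tangent space identity above, which is a short calculation; everything else is formal once one recognizes $\mathcal{O}_\beta \cap \mathfrak{g}_-$ as the fixed set of a smooth involution on a compact manifold.
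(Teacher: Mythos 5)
Your proof is correct and follows the same approach as the paper: both realize $\mathcal{O}_\beta \cap \mathfrak{g}_-$ as the fixed set of the involution $-\phi_*$ restricted to the compact orbit $\mathcal{O}_\beta$, both establish the tangent space identity $[\mathfrak{g}_+,\gamma] = [\mathfrak{g},\gamma] \cap \mathfrak{g}_-$ by decomposing into $\pm 1$-eigenspaces, and both conclude by an open-plus-compactness argument. Your write-up is a bit more explicit about why the fixed set is a manifold (Bochner linearization) and why openness plus closedness suffices, but the substance is identical.
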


\begin{proof}
Clearly $\mathcal{O}_{\beta} \cap \lie{g}_{-}$ is a union of $G^{\phi}$-orbits. We only need to show that the number of orbits is finite. 

Observe that $\lie{g}_- $ is the fixed point set of the linear automorphism $-\phi^*$ of $\lie{g}$, which sends $G$-orbits to $G$-orbits. If $\mathcal{O}_{\beta} \cap \mathfrak{g}_{-}$ is non-empty, then $-\phi^*$ sends $\mathcal{O}_{\beta}$ to itself and $\mathcal{O}_{\beta} \cap \lie{g}_{-} = \mathcal{O}_{\beta}^{-\phi^*}$ is the fixed point set of an order two automorphism. Therefore $\mathcal{O}_{\beta} \cap \lie{g}_{-}$ is union of submanifolds. Given $ \alpha \in  \mathcal{O}_{\beta}\cap \lie{g}_{-} $ we claim that 
\begin{equation}\label{equ4.bra}
T_{\alpha}\mathcal{O}^{-}_{\beta}=(T_{\alpha}\mathcal{O}_{\beta}) \cap \lie{g}_{-}.
\end{equation}
Clearly $T_{\alpha}\mathcal{O}^{-}_{\beta} \subseteq (T_{\alpha}\mathcal{O}_{\beta}) \cap \mathfrak{g}_{-}$. Conversely, suppose that $ \xi \in (T_{\alpha}\mathcal{O}_{\beta}) \cap \lie{g}_{-} $.  Then  $ \xi=[X,\alpha]$, for some $ X\in \mathfrak{g} $. Decompose $ X=X_{+}+X_{-} $ into eigenvectors, so $  [X_{+}, \alpha] \in \mathfrak{g}_{-}^{*}$ and $  [X_{-}, \alpha] \in \mathfrak{g}_{+}^{*}$. Since $\xi \in \lie{g}_-^*$, it follows  that $$ \xi=[X_{+}, \alpha] + [X_{-}, \alpha] = [X_{+}, \alpha] \in T_{\alpha} \mathcal{O}_{\beta} ^{-}.$$

Since both $\mathcal{O}^{-}_{\beta} $ and $\mathcal{O}_{\beta} \cap \lie{g}_{-} $ are manifolds,  (\ref{equ4.bra}) implies that $\mathcal{O}^{-}_{\beta_i} $ is an open subset of $\mathcal{O}_{\beta} \cap \lie{g}_{-} $. Since $\mathcal{O}_{\beta} \cap \lie{g}_{-} $ is compact, it must be covered by a finite number of them, completing the proof. 
\end{proof}

\section{Morse stratification for Real Hamiltonian systems}

\subsection{Morse Stratification for Hamiltonian systems}

Let $\mathcal{H}=(M,\omega, G, \mu) $ be a Hamiltonian system in which   $G$ is  compact and connected and $M$ is connected of dimension $2n$. Fix an $\mathrm{Ad}$-invariant inner product on the Lie algebra $ \mathfrak{g} $ which identifies $\lie{g} \cong \lie{g}^*$ and fix a $G$-invariant Riemannian metric on $M$ compatible with $ \omega$. Define $f:M \rightarrow \mathbb{R}  $ by
\begin{equation}\label{equ4.wsw}
 f(p)=|\mu(p)|^{2}, \ \forall p\in M,
\end{equation}
and assume that $f$ is proper. This ensures that the negative gradient flow determined by the vector field $- \nabla f$ exists for all positive time.

For any $ \beta\in \lie{g}$, the component map $\mu^{\beta}:M \rightarrow \mathbb{R} $ is a (not necessarily proper) Morse-Bott function whose critical set  $ C_{\mu^{\beta}} := \{x \in M \ |\ d\mu^{\beta}_x =0 \}$ is a union of symplectic submanifolds with even Morse indices. Let $ Z_{\beta} $ be the union of path components of $ C_{\mu^{\beta}} $ on which $ \mu^{\beta} $ takes the value $ |\beta|^{2} $; i.e.
\begin{equation}\label{equ4.orb}
Z_{\beta} :=C_{\mu^{\beta}} \cap \Big [ (\mu^{\beta})^{-1}(|\beta|^{2})\Big ].
\end{equation}
Decompose
$$  Z_{\beta} = \coprod_{m=0}^n Z_{\beta,m}$$
where $Z_{\beta,m}$ is the set of points with Morse index $2m$. Denote the isotropy group $ G_{\beta}=\{g\in G\ |\ \mathrm{Ad}_{g}\beta=\beta\} $ with Lie algebra $ \mathfrak{g}_{\beta}$. Then each $ Z_{\beta,m} $ is $ G_{\beta} $-invariant and $$\mathcal{H}_{\beta,m} = (Z_{\beta,m}, \omega, G_{\beta},\mu_{\beta,m})$$ is a Hamiltonian system with moment map $\mu_{\beta,m} = \mathrm{Pr}_{\beta} \circ \mu|_{Z_{\beta,m}}-\beta$ where $ \mathrm{Pr}_{\beta}: \lie{g}\rightarrow \lie{g}_{\beta} $ is orthogonal projection.
The following are due to Kirwan \cite{KIR} (except K2 which is an improvement of Kirwan's result due to Duistermaat-Lerman \cite{LER}).

\begin{enumerate}

\item[K1] The critical set $ C_{f} $  of $f$ is a finite union of disjoint, $G$-invariant, compact (possibly disconnected) subsets 
 \begin{equation*}\label{equ4.zip}
C_{f}=\coprod_{\beta,m} C_{\beta,m},
\end{equation*}
where $\beta \in \lie{t}_+$ are elements of the positive Weyl chamber and $ f(C_{\beta,m}) = \mathcal{O}_\beta$ is the (co)adjoint orbit of $\beta$ and $m \in  \{ 0,1,\dots, \dim(M)/2\} $.
\\
 
 \item[K2] $f$ determines a G-invariant Morse stratification into locally closed submanifolds
$$ M = \bigcup_{\beta,m} S_{\beta,m} $$
where  $S_{\beta,m} $ deformation retracts onto $C_{\beta,m}$ under the negative gradient flow by $f$. Partially order the indices by
\begin{equation*}
(\beta_{1},m_1) < (\beta_{2},m_2) \Leftrightarrow |\beta_{1}| < |\beta_{2}|. 
\end{equation*}
The closure of a stratum $S_{\beta,m}$ in $M$  satisfies $$ \overline{S}_{\beta,m} \subseteq \bigcup_{(\gamma, k) \geq (\beta, m)} S_{\gamma,k} $$
\\
 
\item[K3] Given $p \in C_{\beta,m}$, the tangent space $T_pS_{\beta,m}$ is a symplectic vector subspace of $T_pM$. The codimension of $S_{\beta,m}$ is even and equal to  $$2d(\beta,m) := 2m- \dim G +\dim G_{\beta}.$$ 
\\

\item[K4] For any $ \beta $ and $m$, the Hamiltonian subsystem $ \mathcal{H}_{\beta,m}=(Z_{\beta,m}, \omega, G_{\beta},\mu_{\beta,m}) $ satisifes  $$ \mu^{-1}_{\beta,m}(0)=Z_{\beta,m}\cap \mu^{-1}(\beta) $$ and 
\begin{equation*}\label{equ4.try}
 C_{\beta,m}= G \cdot   \mu_{\beta,m}^{-1}(0) \cong G \times_{G_{\beta}} \mu_{\beta,m}^{-1}(0).
 \end{equation*} 
\\

\end{enumerate}

\subsection{Morse Stratification for real Hamiltonian systems}\label{MSfRH}

Consider a real Hamiltonian system $ \mathcal{RH}=(M,\omega,G,\mu,\sigma,\phi) $ where $G$ is compact and connected and $M$ is connected. Choose invariant compatible metrics on $M$ and $\lie{g}$ so that $f = |\mu|^2$ is $\sigma$-invariant and suppose that $f$ is proper. Let $$f^{\sigma}: M^\sigma \rightarrow \R$$ be the restriction of $f$ to the real locus $M^{\sigma}$. This implies that
\begin{equation}\label{invtgrad}
\nabla f = \sigma_*(\nabla f)
\end{equation} 
and in particular that along $M^{\sigma}$, we have equality $\nabla f = \nabla f^{\sigma}$. Therefore the negative gradient flow on $M$ preserves $M^{\sigma}$.

\begin{proposition}\label{pro4.wsw}
Let $ \mathcal{RH}$ and $f^{\sigma}$ be as above. Then
\begin{enumerate}

\item[(i)]    The critical set of $f^{\sigma}$ is $ C_{f^{\sigma}}= C_{f}\cap M^{\sigma}$ and  thus
\begin{equation*}\label{equ4.nos}
C_{f^{\sigma}}=\coprod_{\beta,m} C_{\beta,m} ^{\sigma}, 
\end{equation*}
where $ C_{\beta,m} ^{\sigma}=C_{\beta,m}  \cap M^{\sigma} $ are $ G^{\phi}$-invariant closed subsets of $M^{\sigma}$ on each of which $ f^{\sigma}$ takes a constant value.
\\

\item[(ii)]  We have a stratification into locally closed submanifolds
$$ M^{\sigma} = \bigcup_{\beta,m} S_{\beta,m}^\sigma$$
where $ S^{\sigma}_{\beta,m} =S_{\beta,m}  \cap M^{\sigma} $ and  $ S_{\beta,m} ^{\sigma}$ deformation retracts onto $C^{\sigma}_{\beta,m}$ via the negative gradient flow. 
\\
\item[(iii)] The closure of $S_{\beta,m}^{\sigma}$ in $M^{\sigma}$ satisfies
$$ \overline{S^{\sigma}}_{\beta,m} \subseteq \bigcup_{(\gamma, k) \geq  (\beta, m)} S_{\gamma,k}^{\sigma} .$$

\item[(iv)]  The codimension of $S_{\beta,m}^{\sigma}$ in $M^{\sigma}$ is half the codimension of $S_{\beta,m}$ in $M$.
\end{enumerate}

\end{proposition}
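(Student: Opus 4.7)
The plan is to deduce each part from the corresponding statement in Kirwan's K1--K3 by exploiting the identity $\nabla f = \sigma_*(\nabla f)$ from (\ref{invtgrad}) together with the fact that $M^\sigma$ is Lagrangian. First, for (i), I would observe that (\ref{invtgrad}) forces $\nabla f$ to be tangent to $M^\sigma$ along $M^\sigma$, so that $\nabla f^\sigma = \nabla f|_{M^\sigma}$ and the critical points of $f^\sigma$ are exactly the critical points of $f$ lying in $M^\sigma$. Intersecting K1 with $M^\sigma$ then gives $C_{f^\sigma} = \coprod_{\beta,m} C^\sigma_{\beta,m}$, and $G^\phi$-invariance of each piece is immediate from the compatibility relation $\sigma \circ g = \phi(g) \circ \sigma$.

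Next, for (ii) and (iii), since $M^\sigma$ is closed and the negative gradient flow of $f$ is $\sigma$-equivariant, it restricts to a flow on $M^\sigma$ that coincides with the negative gradient flow of $f^\sigma$. Restricting K2's deformation retraction to $M^\sigma$ yields the deformation retraction of $S^\sigma_{\beta,m}$ onto $C^\sigma_{\beta,m}$, and intersecting K2's closure relation with the closed set $M^\sigma$ gives (iii). The subtler point is the submanifold claim. I would establish it by showing that whenever $S^\sigma_{\beta,m}$ is non-empty, $\sigma$ stabilizes the whole stratum $S_{\beta,m}$: since $\sigma$ is an isometry preserving $f$ it permutes the $C_{\beta',m'}$, and having any fixed point in $C_{\beta,m}$ forces $\sigma(C_{\beta,m}) \cap C_{\beta,m} \neq \emptyset$, so $\sigma(C_{\beta,m}) = C_{\beta,m}$ by the disjointness in K1; stability of $S_{\beta,m}$ then follows from $\sigma$-equivariance of the flow. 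Once we have this, $S^\sigma_{\beta,m}$ is the fixed locus of the smooth involution $\sigma|_{S_{\beta,m}}$, hence a smooth submanifold, and it is locally closed in $M^\sigma$ because $S_{\beta,m}$ is locally closed in $M$.

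For (iv), I would compute the codimension pointwise at a critical point $p \in C^\sigma_{\beta,m}$. By K3 the normal space $N_pS_{\beta,m} \subset T_pM$ is symplectic; since $\sigma$ stabilizes $S_{\beta,m}$ and is an isometry, $\sigma_*$ preserves $N_pS_{\beta,m}$ and acts there as a linear anti-symplectic involution. The standard fact that the $+1$-eigenspace of such an involution is Lagrangian then gives
\[
\dim (N_pS_{\beta,m})^{\sigma_*} = \tfrac{1}{2}\dim N_pS_{\beta,m} = d(\beta,m),
\]
and under the orthogonal splitting $T_pM^\sigma = (T_pS_{\beta,m})^{\sigma_*} \oplus (N_pS_{\beta,m})^{\sigma_*}$ this $+1$-eigenspace is precisely the normal to $S^\sigma_{\beta,m}$ in $M^\sigma$, which proves (iv) at $p$. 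Since $S^\sigma_{\beta,m}$ deformation retracts onto $C^\sigma_{\beta,m}$, the codimension is constant on the whole stratum.

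I expect the main obstacle to be the submanifold assertion in (ii): the fixed-point-of-involution argument above is clean but depends on the global claim that $\sigma$ stabilizes $S_{\beta,m}$ rather than merely permuting its connected components or mapping it to another stratum of the same $|\beta|$ and index. A purely local alternative, verifying that $f^\sigma$ is Morse--Bott at $C^\sigma_{\beta,m}$ by decomposing $\mathrm{Hess}(f)_p$ into its $\sigma_*$-eigenspaces (which are Hessian-orthogonal because $f$ is $\sigma$-invariant) and invoking the classical Morse--Bott lemma, is available as a backup but more computational.
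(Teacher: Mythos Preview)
Your proposal is correct and follows essentially the same route as the paper, which deduces (i)--(iii) from K1--K2 via the $\sigma$-equivariance of the gradient flow and handles (iv) by the Lagrangian-in-symplectic argument at a critical point. The only cosmetic difference is that for (iv) the paper applies this argument to the \emph{tangent} space $T_pS_{\beta,m}$ (which is symplectic directly by K3) rather than to its normal complement, and the paper is far terser about the submanifold claim in (ii), simply asserting it follows from K2 and (\ref{invtgrad}) without isolating the ``$\sigma$ stabilizes $S_{\beta,m}$'' step you spell out.
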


\begin{proof}
Both (i) and (ii) follows easily from Kirwan's Theorems K1, K2 combined with (\ref{invtgrad}). 

The closure of $S_{\beta,m}^{\sigma}$ satisfies $$\overline{S^{\sigma}}_{\beta,m}  \subseteq \overline{S}_{\beta,m} \cap M^{\sigma} \subseteq \Big(\bigcup_{(\gamma, k) \geq (\beta, m)} S_{\gamma,k} \Big) \cap M^{\sigma} = \bigcup_{(\gamma, k) \geq (\beta, m)} S_{\gamma,k}^{\sigma}$$ proving (iii).

To prove (iv) recall that at $p \in C_{\beta,m}^{\sigma}$ the tangent space $T_pS_{\beta,m}$ is a symplectic vector subspace of $T_pM$. Because $\sigma$ is anti-symplectic it follows that $ T_pS_{\beta,m}^{\sigma}$ is Lagrangian in $T_pS_{\beta, m}$, hence half dimensional. Since $M^{\sigma}$ has half the dimension of $M$ the result follows.
\end{proof}

As we saw in Kirwan's result K4, for each $ \beta $ and $ m $, there exists a  Hamiltonian subsystem $\mathcal{H}_{\beta,m}= (Z_{\beta,m},\omega,G_{\beta},\mu_{\beta,m}) $. It is easy to see that  $ (\sigma,\phi) $ also restricts to a real pair on this system. 

\begin{proposition}\label{pro4.orb}\label{pro4.zip}
Choose $G^{\phi}$-orbits representatives $\beta_1,\dots,\beta_k \in \mathcal{O}_\beta  \cap \lie{g}_-$ (these are finite by Lemma \ref{addedlemma}). Then for each $m$ we have a natural $G^{\phi}$-equivariantly diffeomorphism  
$$C_{\beta,m}^{\sigma} \cong \coprod_{i=1}^k G^{\phi} \times_{G^\phi_{\beta_i}}\mu^{-1}_{\beta_i,m}(0)^{\sigma}.$$
\end{proposition}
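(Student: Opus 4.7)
The plan is to construct, for each $i$, the map
$$\Psi_i: G^{\phi} \times_{G^\phi_{\beta_i}}\mu^{-1}_{\beta_i,m}(0)^{\sigma} \longrightarrow C_{\beta,m}^{\sigma}, \qquad [g,p] \mapsto g \cdot p,$$
and prove that $\Psi := \coprod_{i=1}^k \Psi_i$ is a $G^\phi$-equivariant diffeomorphism. First I would check $\Psi_i$ is well defined. Since $\beta_i \in \lie{g}_-$ we have $\phi_*\beta_i = -\beta_i$, and because $\phi_*$ is an involutive isometry of $\lie{g}$ (hence self-adjoint), the identity $\mu \circ \sigma = -\phi_*\circ \mu$ yields $\sigma^*\mu^{\beta_i} = \mu^{\beta_i}$ and hence $\sigma(Z_{\beta_i,m}) = Z_{\beta_i,m}$. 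Combined with $\sigma(\mu^{-1}(\beta_i)) = \mu^{-1}(-\phi_*\beta_i) = \mu^{-1}(\beta_i)$, this shows $\sigma$ preserves $\mu^{-1}_{\beta_i,m}(0)$. Kirwan's K4 applied after $G$-conjugating $\beta$ to $\beta_i$ gives $\mu^{-1}_{\beta_i,m}(0) \subseteq C_{\beta,m}$, and the compatibility $\sigma\circ g = \phi(g)\circ \sigma$ ensures $G^\phi$ preserves $M^\sigma$, so $\Psi_i$ lands in $C_{\beta,m}^\sigma$.

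Next I would establish that $\Psi$ is a bijection. For surjectivity, given $q \in C_{\beta,m}^\sigma$, the computation $\mu(q) = \mu(\sigma(q)) = -\phi_*\mu(q)$ forces $\mu(q) \in \mathcal{O}_\beta \cap \lie{g}_-$; by Lemma \ref{addedlemma} there is a unique index $i$ and some $g \in G^\phi$ with $\mathrm{Ad}_g \beta_i = \mu(q)$, and then $p := g^{-1}\cdot q \in \mu^{-1}_{\beta_i,m}(0)^{\sigma}$ satisfies $q = \Psi_i([g,p])$. Injectivity is a consequence of the distinctness of the $\beta_i$'s as $G^\phi$-orbit representatives: an equality $\Psi_i([g,p]) = \Psi_j([h,r])$ forces $\mathrm{Ad}_{h^{-1}g}\beta_i = \beta_j$ with $h^{-1}g \in G^\phi$, so $i = j$ and $h^{-1}g \in G^\phi_{\beta_i}$, whence $[g,p] = [h,r]$ in the associated bundle (reducing to the standard bijectivity argument for Kirwan's diffeomorphism in K4).

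Finally, for smoothness I would factor each $\Psi_i$ as
$$G^\phi \times_{G^\phi_{\beta_i}} \mu^{-1}_{\beta_i,m}(0)^{\sigma} \hookrightarrow G \times_{G_{\beta_i}} \mu^{-1}_{\beta_i,m}(0) \xrightarrow{\ \sim\ } C_{\beta,m},$$
where the first arrow is a smooth injection (well defined and injective thanks to $G^\phi \cap G_{\beta_i} = G^\phi_{\beta_i}$) and the second is Kirwan's diffeomorphism from K4. The differential of $\Psi_i$ at any $[g,p]$ is injective because $\lie{g} = \lie{g}_+ \oplus \lie{g}_-$ together with $T_p\mu^{-1}_{\beta_i,m}(0)^{\sigma} \hookrightarrow T_p\mu^{-1}_{\beta_i,m}(0)$ produces a direct-sum splitting compatible with the corresponding splitting of $T_{g\cdot p}C_{\beta,m}$. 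Since $\Psi$ is a bijection onto $C_{\beta,m}^\sigma$ (a smooth submanifold of $C_{\beta,m}$ as the fixed set of the smooth involution $\sigma$) with injective differential, a local dimension count using Proposition \ref{pro4.wsw}(iv) will show it is a local diffeomorphism, hence a diffeomorphism. The main obstacle I expect is really bookkeeping: verifying that the Morse-index parameter $m$ transfers consistently under $G$-conjugation $\beta \mapsto \beta_i$ and that the coproduct over $i$ truly exhausts $C_{\beta,m}^\sigma$, both of which ultimately rest on Lemma \ref{addedlemma} and the $G$-invariance of the stratification.
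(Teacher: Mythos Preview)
Your proposal is correct and follows essentially the same strategy as the paper: decompose $C_{\beta,m}^{\sigma}$ according to which $G^{\phi}$-orbit in $\mathcal{O}_\beta \cap \lie{g}_-$ the moment map lands in, and then identify each piece with $G^{\phi}\times_{G^{\phi}_{\beta_i}}\mu^{-1}_{\beta_i,m}(0)^{\sigma}$ by working inside Kirwan's model $C_{\beta,m}\cong G\times_{G_{\beta_i}}\mu^{-1}_{\beta_i,m}(0)$ from K4. The paper runs the argument by characterizing the $\sigma$-fixed equivalence classes in that model; you instead build the map $\Psi$ directly and verify bijectivity, but the content is the same.

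Two minor remarks on your smoothness paragraph. First, $C_{\beta,m}$ need not be a smooth manifold (since $\mu_{\beta_i,m}^{-1}(0)$ may be singular), so the phrase ``fixed set of the smooth involution $\sigma$'' should be read as taking place inside the ambient K4 identification rather than intrinsically; the paper sidesteps this entirely by treating the ``diffeomorphism'' as inherited from K4. Second, Proposition~\ref{pro4.wsw}(iv) concerns the codimension of the strata $S_{\beta,m}^{\sigma}$, not of $C_{\beta,m}^{\sigma}$, so it is not quite the right reference for your dimension count; the cleaner route is simply to observe that your factorization through K4 already exhibits $\Psi_i$ as the restriction of a diffeomorphism, which is all that is needed.
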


\begin{proof}
Since $\mu(C_{\beta,m}^{\sigma} ) \subseteq \mathcal{O}_\beta \cap \lie{g}_- = \coprod_{i=1}^k \mathcal{O}^-_{\beta_i}$ is a disconnected union, we obtain $$C_{\beta,m}^{\sigma} = \coprod_{i=1}^k [C_{\beta,m}^{\sigma} \cap \mu^{-1}(\mathcal{O}^-_{\beta_i})].$$

By K4, we have a $G$-diffeomorphism $$ C_{\beta,m} = C_{\beta_i,m} \cong G \times_{G_{\beta_i}} \mu^{-1}_{\beta_i,m}(0).$$ The subset $C_{\beta_i,m}^{\sigma}$ corresponds to those equivalence classes of pairs $(g,x)\in G \times \mu^{-1}_{\beta_i,m}(0)$ for which there exists $h \in G_{\beta_i}$ such that $(g h^{-1}, h x) = (\phi(g), \sigma(x))$. 

The subset $C_{\beta_i,m}^{\sigma}\cap \mu^{-1}(\mathcal{O}^-_{\beta_i})$ corresponds to those equivalence classes containing a representative $(g,x)$ for which $g = \phi(g)$. Such a class is fixed by $\sigma$ if and only if $(g h^{-1}, h x) = (\phi(g), \sigma(x)) = (g,\sigma(x))$, so $(g,x) \in G^{\phi} \times \mu_{\beta_i,m}^{-1}(0)$. Finally, if $g \in G^{\phi}$ then $gh^{-1} \in G^{\phi}$ if and only if $h \in G^{\phi}$. We conclude that 
$$  C_{\beta,m}^{\sigma} \cap \mu^{-1}(\mathcal{O}^-_{\beta_i}) \cong G^{\phi} \times_{G^\phi_{\beta_i}}( \mu^{-1}_{\beta_i,m}(0)^\sigma).$$

\end{proof}

\section{The Free Extension Property}

\subsection{Elementary abelian 2-subgroups}

\begin{definition}\label{def2.hsh}
Let $G$ be a compact Lie group. An \textbf{elementary abelian $2$-subgroup} is a subgroup $E \leq G$ which is isomorphic to $(\mathbb{Z}_{2})^{n}$ for some $n \geq 0$. We say such an $E$ is \textbf{maximal} if it is not contained in a larger elementary abelian $p$-subgroup.
\end{definition}

 \begin{example}\label{exa2.gal}
The $n$-torus $\mathbb{T}^{n}= \mathrm{U}(1)^{n}$ contains a unique maximal abelian $2$-subgroup, $E(n) =\{\lambda\in T\ |\ \lambda^{2}=1\} = \mathrm{O}(1)^n \cong (\Z_2)^n$. 
 \end{example}

\begin{example}\label{exa2.rsa}
The diagonal matrix group 
\begin{equation*}\label{equ2.jus}
D(n)= \Big\lbrace \begin{pmatrix}
\varepsilon_{1} & \cdots & 0\\
\vdots & \ddots & \vdots \\
0 & \cdots & \varepsilon_{n} \\
\end{pmatrix}\ |\ \varepsilon_{i}\in \lbrace \pm 1\rbrace \Big \rbrace \cong (\Z_2)^n
\end{equation*}
is the unique maximal elementary abelian 2-group up to conjugation in both $\mathrm{O}(n)$ and $\mathrm{U}(n)$. 
\end{example}

\begin{example}\label{exa2.ilo}
The matrix group
\begin{equation*}
SD(n)= \Big\lbrace \begin{pmatrix}
\varepsilon_{1} & \cdots & 0\\
\vdots & \ddots & \vdots \\
0 & \cdots & \varepsilon_{n} \\
\end{pmatrix} \in D(n) \ |\  \prod_{i=1}^n \varepsilon_{i} =1  \Big \rbrace \cong (\Z_2)^{n-1}
\end{equation*}
is the unique maximal elementary abelian 2-group up to conjugation in both $\mathrm{SO}(n)$ and $\mathrm{SU}(n)$. 
\end{example}

In general, a compact connected  Lie group $G$ may contain more than one conjugacy class of maximal elementary abelian 2-groups. We refer to Griess \cite{Gri} for a classification.

\subsection{Free extensions and spectral sequences}

\begin{definition}\label{def2.tic}
Let $X$ be a left $G$-space  and $ EG\rightarrow BG $ be a fixed universal $G$-bundle. The twisted product space $ X_{G}=EG \times_{G} X $ is called the \textbf{homotopy quotient} or the  \textbf{Borel construction}\index{homotopy quotient}\index{Borel construction} of $X$ with respect to the fixed universal bundle. Given a commutative ring with unity $R$ define the \textbf{equivariant cohomology}
\begin{equation*}
H^{*}_{G}(X;R)=H^{*}(X_{G};R).
\end{equation*}
\end{definition} 

Let $i: K \hookrightarrow G$ be an inclusion of compact Lie groups, and suppose $G$ acts on a connected manifold $X$. Then the homotopy quotients fit into the natural commutative diagram:
\begin{equation}\label{pullbackdiag}
\xymatrix{
G/K \ar[d]^{=} \ar[r]^{j_X}  & X_{K} \ar[r]^{i_{X}}  \ar[d] & X_{G} \ar[d] \\
G/K  \ar[r]^{j} & BK \ar[r]^{Bi} & BG}
\end{equation}

\begin{proposition}\label{pro5.orb}
Let $G$, $K$, and $X$ be as above and let $\mathbb{F}$ be a field. If $ j^{*}:H^{*}(BK;\mathbb{F}) \rightarrow H^{*}(G/K) $ is surjective then we have an isomorphism  $$ H_{K}^{*}(X;\mathbb{F}) \cong H^*(G/K;\mathbb{F}) \otimes_K H_{G}^{*}(X;\mathbb{F}) $$ as graded $H_{G}^{*}(X;\mathbb{F})$-modules. In particular, $ i_{X}^{*} : H_{G}^{*}(X;\mathbb{F}) \rightarrow  H_{K}^{*}(X;\mathbb{F})  $ is injective.
\end{proposition}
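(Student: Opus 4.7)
The plan is to apply the Leray-Hirsch theorem to the top fibration $G/K \to X_K \to X_G$ of diagram (\ref{pullbackdiag}); the hypothesis about the bottom fibration will transfer by naturality.

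First I would extract the key factorization from (\ref{pullbackdiag}). Let $\pi_K : X_K \to BK$ denote the middle vertical arrow. Commutativity of the left-hand square gives $j = \pi_K \circ j_X$, and therefore
$$ j^* \;=\; j_X^* \circ \pi_K^* \colon H^*(BK;\mathbb{F}) \longrightarrow H^*(G/K;\mathbb{F}). $$
Since $j^*$ is assumed surjective, $j_X^* : H^*(X_K;\mathbb{F}) \to H^*(G/K;\mathbb{F})$ must be surjective as well. Thus the hypothesis, which a priori concerns only universal bundles, has been transferred to the top fibration for free.

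Next, since $G/K$ is a compact manifold, $H^*(G/K;\mathbb{F})$ has finite type, and I may choose homogeneous classes $\tilde e_1 = 1, \tilde e_2, \ldots, \tilde e_n \in H^*(X_K;\mathbb{F})$ whose images $e_i := j_X^*(\tilde e_i)$ form an $\mathbb{F}$-basis of $H^*(G/K;\mathbb{F})$ with $e_1 = 1$. The Leray-Hirsch theorem applied to $G/K \to X_K \to X_G$ then asserts that the map
$$ \Phi \colon H^*_G(X;\mathbb{F}) \otimes_{\mathbb{F}} H^*(G/K;\mathbb{F}) \;\longrightarrow\; H^*_K(X;\mathbb{F}), \qquad \alpha \otimes e_i \;\longmapsto\; i_X^*(\alpha) \cup \tilde e_i, $$
is an isomorphism of graded $H^*_G(X;\mathbb{F})$-modules. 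This is precisely the claimed tensor product decomposition.

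The injectivity of $i_X^*$ is then automatic: via $\Phi$, the class $i_X^*(\alpha)$ corresponds to $\alpha \otimes e_1 = \alpha \otimes 1$, and the map $\alpha \mapsto \alpha \otimes 1$ into a free module is visibly injective. I do not anticipate a genuine obstacle, since $H^*(G/K;\mathbb{F})$ is finite-dimensional and the chosen classes $\tilde e_i$ restrict to a fiber basis, so the Leray-Hirsch hypotheses are met and no issue with local coefficients arises. The only step requiring care is the factorization $j = \pi_K \circ j_X$, which makes the whole argument go through.
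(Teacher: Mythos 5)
Your proof is correct and follows essentially the same path as the paper: deduce surjectivity of $j_X^*$ from surjectivity of $j^*$ via commutativity of the left square in diagram (\ref{pullbackdiag}), then apply Leray--Hirsch to the fibration $G/K \to X_K \to X_G$. The paper states this in two lines; your write-up just fills in the factorization $j^* = j_X^* \circ \pi_K^*$ and the module-isomorphism statement explicitly, which is a helpful elaboration rather than a different argument.
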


\begin{proof}
If $j^*$ is surjective, then $j_X^* : H_{K}^{*}(X;\mathbb{F}) \rightarrow H^*(G/K;\mathbb{F})$ must also be surjective by applying cohomology to the commutative diagram (\ref{pullbackdiag}). The result now follows by the Leray-Hirsch Theorem. 
\end{proof}

In the following proposition, we give a criterion for the surjectivity of the morphism $ j^{*} $ in the fibration   $G/K\overset{j}{\hookrightarrow}  BK \overset{Bi}{\rightarrow} BG $.  
Recall that a ring homomorphism $ \varphi: R \rightarrow S $ between commutative rings with unity is called a \textbf{free extension} if $S$ is a free $R$-module with respect to the module structure induced by $\varphi$.
 
\begin{proposition}\label{pro5.aba}
Let $K\leq G $  be a pair of compact Lie groups and $\mathbb{F}$ be a field. Then the following are equivalent
\begin{enumerate}
\item[(i)] $ j^{*}:H^{*}(BK;\mathbb{F}) \rightarrow H^{*}(G/K) $ is surjective.
\item[(ii)]  $Bi^*: H^{*}(BG;\mathbb{F}) \rightarrow H^*(BK;\mathbb{F})  $ is a free extension and the action of $ \pi_1(BG) $ on $H^{*}(G/K;\mathbb{F})$ is trivial.
\end{enumerate}
\end{proposition}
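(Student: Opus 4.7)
The plan is to analyze the Serre cohomology spectral sequence of the fibration
\[ G/K \overset{j}{\hookrightarrow} BK \xrightarrow{Bi} BG, \]
which takes the form $E_2^{p,q} = H^p(BG;\, \mathcal{H}^q(G/K;\mathbb{F})) \Rightarrow H^{p+q}(BK;\mathbb{F})$ with local coefficient system given by the monodromy action of $\pi_1(BG)$ on $H^*(G/K;\mathbb{F})$. The map $j^*$ is the fiber edge homomorphism, factoring as $H^*(BK) \twoheadrightarrow E_\infty^{0,*} \hookrightarrow E_2^{0,*} = H^*(G/K)^{\pi_1(BG)}$.

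For (i) $\Rightarrow$ (ii): Surjectivity of $j^*$ onto $H^*(G/K)$ immediately forces $H^*(G/K)^{\pi_1(BG)} = H^*(G/K)$, i.e.\ trivial action, and $E_\infty^{0,*} = E_2^{0,*}$. The latter means every class in $E_r^{0,*}$ is a permanent cycle, so $d_r|_{E_r^{0,*}} = 0$ for all $r \geq 2$. Differentials out of $E_r^{*,0}$ vanish automatically since their targets lie in negative fiber degree. Because $E_2^{p,q} = E_2^{p,0} \otimes E_2^{0,q}$ as a bigraded algebra, an induction on $r$ using the Leibniz rule propagates these vanishings to all of $E_r$, so the spectral sequence collapses at $E_2$. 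Lifting a basis of $H^*(G/K)$ back to $H^*(BK)$ via the surjectivity of $j^*$ and invoking the multiplicative Leray--Hirsch Theorem then exhibits $H^*(BK) \cong H^*(BG) \otimes_{\mathbb{F}} H^*(G/K)$ as a free $H^*(BG)$-module, so $Bi^*$ is a free extension.

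For (ii) $\Rightarrow$ (i): Trivial $\pi_1(BG)$-action yields $E_2 = H^*(BG) \otimes H^*(G/K)$. To deduce collapse from freeness alone, I would invoke the Eilenberg--Moore spectral sequence
\[ \mathrm{Tor}^{*,*}_{H^*(BG;\mathbb{F})}\bigl(H^*(BK;\mathbb{F}),\, \mathbb{F}\bigr) \Rightarrow H^*(G/K;\mathbb{F}), \]
whose convergence is ensured by the nilpotent (in particular, trivial) $\pi_1(BG)$-action on the fiber cohomology. Freeness of $Bi^*$ annihilates all higher Tor groups, leaving $\mathrm{Tor}^0 = H^*(BK) \otimes_{H^*(BG)} \mathbb{F}$ as the entire $E_\infty$-page, and the edge homomorphism of this EMSS is precisely the map induced by $j^*$. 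This yields a $j^*$-induced isomorphism $H^*(BK) \otimes_{H^*(BG)} \mathbb{F} \xrightarrow{\cong} H^*(G/K)$, proving surjectivity of $j^*$.

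The main obstacle is the direction (ii) $\Rightarrow$ (i): freeness of the extension $Bi^*$ does not by itself constrain the cohomology of the fiber, so some additional tool is required to transfer the information back. The Eilenberg--Moore route is the cleanest, but hinges on a convergence theorem that must be cited (as in McCleary's \emph{User's Guide to Spectral Sequences}); a direct Leray--Hirsch-style lift of a basis of $H^*(G/K)$ would circularly presuppose the surjectivity of $j^*$ that one is trying to prove.
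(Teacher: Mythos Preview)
Your proof is correct and follows essentially the same approach as the paper's. For (i) $\Rightarrow$ (ii) the paper simply cites Mimura--Toda (Theorem 4.4), whereas you spell out the standard Serre spectral sequence collapse argument; for (ii) $\Rightarrow$ (i) both you and the paper invoke the Eilenberg--Moore spectral sequence and use freeness to concentrate it in $\mathrm{Tor}^0$, the only difference being that you conclude surjectivity of $j^*$ directly via the EMSS edge homomorphism while the paper takes the slightly more roundabout route of deducing Serre collapse from a dimension count and then applying Leray--Hirsch.
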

\begin{proof}
That (i) implies (ii) is proven in (\cite{MT} Theorem 4.4). Conversely, suppose $Bi^*: H^{*}(BG;\mathbb{F}) \rightarrow H^*(BK;\mathbb{F})  $ is a free extension and the action of $ \pi_1(BG) $ on  $H^{*}(G/K;\mathbb{F})$ is trivial. The Eilenberg-Moore spectral sequence of this fibration converges strongly to $ H^*(G/K;\mathbb{F}) $ and its second page has the following form:
$$E_2^{*,*}\cong \mathbb{F}\otimes_{H^*(BG;\mathbb{F})} H^*(BK;\mathbb{F}). $$
The free extension condition forces this spectral sequence to collapse to the zeroth column $ E_2^{0,*} $ which implies that 
$$
H^*(G/K;\mathbb{F})\cong \mathbb{F}\otimes_{H^*(BG;\mathbb{F})} H^*(BK;\mathbb{F}).
$$
 We can write
 \begin{align*}
 H^*(BK;\mathbb{F})&\cong  H^*(BG;\mathbb{F})  \otimes_{\mathbb{F}} \mathbb{F} \otimes_{H^*(BG;\mathbb{F})}  H^{*}(BK;\mathbb{F})\\
 & \cong H^*(BG;\mathbb{F}) \otimes_{\mathbb{F}}   H^*(G/K;\mathbb{F}). 
 \end{align*}
  That is, the Serre spectral sequence of the fibration collapses at page two and by the Leray-Hirsch theorem, $ j^*: H^*(BK;\mathbb{F}) \rightarrow  H^*(G/K;\mathbb{F})$ is surjective. 
\end{proof}

\begin{example}\label{exa.rst}
Let $T:=\mathrm{U}(1)^{n}$ be an $n$-torus and $ E(n):= \mathrm{O}(1)^n$ be its unique maximal elementary abelian 2-subgroup. Consider the fibration $ T/E(n)\overset{j}{\hookrightarrow} BE(n) \overset{Bi}{\rightarrow} BT $. The cohomology of classifying spaces $ BT $ and $ BE(n) $ are 
$$
H^{*}(BT;\mathbb{Z}_{2})=\mathbb{Z}_{2}[c_{1},\dots,c_{n}]
$$
and
$$
H^{*}(BE(n);\mathbb{Z}_{2})=\mathbb{Z}_{2}[x_{1},\dots,x_{n}]
$$
 where $c_{i}$ is the Chern class of degree $i$,  $x_{i}$ has degree one and  the  induced morphism $ Bi^{*}: H^{*}(BT;\mathbb{Z}_{2}) \rightarrow  H^{*}(BE(n);\mathbb{Z}_{2})$  sends $ c_{i} $ to $ x^{2}_{i} $ (see \cite{MT}, Theorem 5.11). This shows that the set $$\{x_{1}^{m_{1}}...x_{n}^{m_{n}} \ |\ m_{k}\in \{0,1\} \}$$ is a basis for $H^{*}(BE(n);\mathbb{Z}_{2})  $ as a $H^{*}(BT;\mathbb{Z}_{2})  $-module. Therefore, $ Bi^{*} $ is a free extension. Since $T$ is connected, $ \pi_1(BT) $ is trivial and by Proposition \ref{pro5.aba}  the induced morphism $ j^*: H^*(BE(n);\Z_2) \rightarrow  H^*(T/E(n);\Z_2)$ is surjective.     
\end{example}

\begin{example}\label{exa2.spt}\label{exa5.try}
Let $ D(n)\subseteq \mathrm{O}(n) $ and $ SD(n)\subseteq \mathrm{SO}(n) $ be the subgroups of diagonal matrices respectively. Consider two fibrations $ \mathrm{O}(n)/D(n) \overset{j}{\hookrightarrow} BD(n) \overset{Bi}{\rightarrow} B\mathrm{O}(n) $
and $ \mathrm{SO}(n)/SD(n) \overset{j_0}{\hookrightarrow} BSD(n) \overset{Bi_0}{\rightarrow} B\mathrm{SO}(n) $.   It is known by Theorem 5.9 in \cite{MT}  that the induced morphisms $$j^*: H^{*}(BD(n);\Z_{2}) \rightarrow H^{*}(\mathrm{O}(n)/D(n);\Z_{2}) $$ and $$j_{0}^{*}: H^{*}(BSD(n);\Z_{2}) \rightarrow H^{*}(\mathrm{SO}(n)/D(n);\Z_{2}) $$  are both surjective. 
\end{example}

\subsection{The Free Extension Property for Involutive Lie Groups}\label{freeextsect}

An \textbf{involutive Lie group} is a pair $ (G,\phi) $ in which $G$ is a Lie group  and $ \phi: G \rightarrow G $ is an automorphism of order 2. The subgroup of invariant elements is denoted by $ G^{\phi}=\{g\in G\ |\ \phi(g)=g\} $.
Decompose the Lie algebra into $\phi_*$-eigenspaces $ \mathfrak{g}=\mathfrak{g}_{+}\oplus \mathfrak{g}_{-} $.  Given $ \beta\in \mathfrak{g}_{-} $, consider
\begin{equation*}
 G^{\phi}_{\beta}:=\lbrace g\in G^{\phi}\ |\ \mathrm{Ad}_{g} \beta=\beta \rbrace.
 \end{equation*}

\begin{definition}\label{def5.orb}
We say the involutive Lie group $ (G,\phi) $ has the \textbf{free extension property} (FEP)  if for all $ \beta\in \mathfrak{g}_{-} $ and every maximal abelian 2-subgroup $D_\beta \leq G^{\phi}_{\beta} $, the induced morphism  $ H^{*}(BD_{\beta};\mathbb{Z}_{2}) \rightarrow H^*(G^{\phi}_\beta/D_\beta;\Z_2)$ is surjective. 
\end{definition}

\begin{remark}\label{rem5.aba}
The terminology ``free extension" comes from Proposition \ref{pro5.aba}.
\end{remark} 

\begin{remark}\label{rem5.orb}
An involutive Lie group $ (G,\phi) $ has FEP if and only if  $ (G_{\beta}^{\phi}, \mathrm{Id}) $ has FEP for every $ \beta\in \mathfrak{g}_{-} $.
\end{remark} 
 
\begin{proposition}\label{pro.abc}
Let $G$ be a compact connected Lie group. If the integral cohomology $H^*(G;\Z)$ contains no 2-torsion, then the pair $ (G,\mathrm{Id}) $ has the free extension property.
\end{proposition}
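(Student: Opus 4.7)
Since $\phi = \mathrm{Id}$, the decomposition $\mathfrak{g} = \mathfrak{g}_+ \oplus \mathfrak{g}_-$ of (\ref{equ3.fub}) collapses to $\mathfrak{g}_- = 0$, so the only element $\beta \in \mathfrak{g}_-$ is $\beta = 0$, with isotropy $G_0^{\mathrm{Id}} = G$. Unwinding Definition \ref{def5.orb}, the statement that $(G,\mathrm{Id})$ has FEP reduces to the claim that for every maximal elementary abelian $2$-subgroup $D \leq G$, the restriction $H^{*}(BD;\Z_2) \to H^{*}(G/D;\Z_2)$ is surjective. My plan is to verify this via Proposition \ref{pro5.aba}: because $G$ is connected we have $\pi_1(BG) = \pi_0(G) = 0$, so condition (ii) of that proposition is automatic, and it suffices to show that $Bi^{*}:H^{*}(BG;\Z_2) \to H^{*}(BD;\Z_2)$ is a free extension.

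Next I would invoke the classical theorem of Borel that when $H^{*}(G;\Z)$ has no $2$-torsion, $2$ is not a torsion prime of $G$, and hence every elementary abelian $2$-subgroup of $G$ is toral, i.e.\ contained in some maximal torus. Combined with the uniqueness of the maximal elementary abelian $2$-subgroup of a torus (Example \ref{exa2.gal}) and the maximality of $D$ in $G$, this forces $D$ to be conjugate to $E(n) \leq T$, where $T$ is a maximal torus of rank $n$. After replacing $D$ by this conjugate we may factor the inclusion as $D \hookrightarrow T \hookrightarrow G$, yielding a composition
\begin{equation*}
H^{*}(BG;\Z_2) \xrightarrow{\ \alpha\ } H^{*}(BT;\Z_2) \xrightarrow{\ \beta\ } H^{*}(BD;\Z_2).
\end{equation*}

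The map $\beta$ is a free extension by Example \ref{exa.rst}. The map $\alpha$ is a free extension by a second theorem of Borel: the absence of $2$-torsion in $H^{*}(G;\Z)$ implies that the Serre spectral sequence of the fibration $G/T \to BT \to BG$ collapses at $E_2$ mod $2$, so $H^{*}(BT;\Z_2)$ is a free module of rank $|W|$ over $H^{*}(BG;\Z_2) \cong H^{*}(BT;\Z_2)^{W}$. Finally, the composition of two free extensions is again a free extension (if $\{b_i\}$ is an $H^{*}(BG;\Z_2)$-basis of $H^{*}(BT;\Z_2)$ and $\{c_j\}$ is an $H^{*}(BT;\Z_2)$-basis of $H^{*}(BD;\Z_2)$, then $\{b_i c_j\}$ is an $H^{*}(BG;\Z_2)$-basis of $H^{*}(BD;\Z_2)$), so $Bi^{*} = \beta \circ \alpha$ is a free extension, which completes the argument.

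The main obstacle is the appeal to the Borel toral theorem: without the no-$2$-torsion hypothesis, $G$ may admit non-toral maximal elementary abelian $2$-subgroups (such as in $\mathrm{Spin}$ or exceptional groups), and the reduction $D \mapsto E(n)\subseteq T$ would fail. The same hypothesis is precisely what promotes $\alpha$ from an injection onto the Weyl invariants to a free extension, so both uses of ``no $2$-torsion'' are essential and cannot be weakened without changing the conclusion.
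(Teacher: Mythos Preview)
Your proof is correct and rests on the same two classical inputs as the paper's proof: Borel's theorem that absence of $2$-torsion forces every elementary abelian $2$-subgroup into a maximal torus, and the fact that $H^{*}(BT;\Z_2)\to H^{*}(G/T;\Z_2)$ is surjective (equivalently, that $H^{*}(BG;\Z_2)\to H^{*}(BT;\Z_2)$ is a free extension). The organization differs: you invoke the equivalence of Proposition~\ref{pro5.aba} to reduce the problem to showing $Bi^{*}$ is a free extension, then factor $Bi^{*}=\beta\circ\alpha$ through $BT$ and observe that free extensions compose. The paper instead attacks the surjectivity of $j^{*}:H^{*}(BD;\Z_2)\to H^{*}(G/D;\Z_2)$ head-on, using a pullback diagram of $T/D$-bundles over $G/T\to BT$ and Leray--Hirsch to identify $j^{*}$ with $\mathrm{Id}_{H^{*}(T/D)}\otimes (j')^{*}$. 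Your route is a bit more modular (the ``composition of free extensions'' step is reusable), while the paper's diagram makes the role of the intermediate quotient $G/T$ more visible; logically the two arguments are interchangeable via Proposition~\ref{pro5.aba}.
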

\begin{proof}
Let $D$ be a maximal elementary abelian 2-subgroup of $G$. By a theorem of Borel (see \cite{KN}, Theorem 1.1), since   $ H^{*}(G;\Z) $ has no 2-torsion, there exists a maximal torus $T$ in $G$ such that $ D \leq  T$.  The morphism $j: G/D \rightarrow BD$ fits into a commuting diagram 
$$  \xymatrix{ T/D \ar[r]^k \ar[d]^=& G/D \ar[r] \ar[d]^j & G/T \ar[d]^{j'} \\ 
T/D \ar[r]^i & BD \ar[r] & BT}$$
which is a pullback of $T/D$ fibre bundles. By Example \ref{exa.rst} we know that $i^*$ is surjective, from which it follows that $k^*$ is surjective. Therefore by Leray Hirsch we have $ H^*(G/D;\Z_2) \cong H^*(T/D;\Z_2) \otimes H^*(G/T;\Z_2)$ and  $ H^*(BD;\Z_2) \cong H^*(T/D;\Z_2) \otimes H^*(BT;\Z_2)$ and these isomorphisms identify $j^*$ with $Id_{H^*(T/D;\Z_2)} \otimes (j')^*$. Since $(j')^*$ is known to be surjective by  Theorem 8.3 in \cite{MCC}, we conclude that $j^*$ is surjective.
\end{proof}

\begin{proposition}\label{rem5.wsw}
The following pairs   $$(\mathrm{U}(n),\mathrm{Id}), (\mathrm{SU}(n),\mathrm{Id}), (\mathrm{Sp}(n),\mathrm{Id}), (\mathrm{O}(n),\mathrm{Id}), (\mathrm{SO}(n),\mathrm{Id})$$ have the free extension property.
\end{proposition}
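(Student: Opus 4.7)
The plan is to reduce the statement to previously established results by observing that for $\phi=\mathrm{Id}$ the eigenspace $\lie{g}_- = 0$, so the only element of $\lie{g}_-$ is $\beta = 0$, giving $G^{\phi}_\beta = G^{\phi} = G$. Thus verifying FEP amounts to checking, for each group $G$ on the list, that for every maximal elementary abelian 2-subgroup $D \leq G$ the induced map $H^*(BD;\Z_2) \to H^*(G/D;\Z_2)$ is surjective.

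First I would dispatch the cases $\mathrm{U}(n)$, $\mathrm{SU}(n)$, and $\mathrm{Sp}(n)$. These are precisely the compact connected Lie groups on the list whose integral cohomology is torsion-free; this is standard, since $H^*(\mathrm{U}(n);\Z)$, $H^*(\mathrm{SU}(n);\Z)$ and $H^*(\mathrm{Sp}(n);\Z)$ are well-known to be exterior algebras on odd-degree generators over $\Z$. In particular they contain no 2-torsion, so Proposition \ref{pro.abc} directly yields that the pairs $(\mathrm{U}(n),\mathrm{Id})$, $(\mathrm{SU}(n),\mathrm{Id})$ and $(\mathrm{Sp}(n),\mathrm{Id})$ have FEP.

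The cases $\mathrm{O}(n)$ and $\mathrm{SO}(n)$ require more care, because their integral cohomology has 2-torsion and Proposition \ref{pro.abc} does not apply. However, by Examples \ref{exa2.rsa} and \ref{exa2.ilo}, the subgroups of diagonal matrices $D(n) \leq \mathrm{O}(n)$ and $SD(n) \leq \mathrm{SO}(n)$ are the unique maximal elementary abelian 2-subgroups up to conjugation. Since the FEP condition only depends on the conjugacy class of the maximal elementary abelian 2-subgroup, it suffices to check surjectivity of $H^*(BD(n);\Z_2) \to H^*(\mathrm{O}(n)/D(n);\Z_2)$ and $H^*(BSD(n);\Z_2) \to H^*(\mathrm{SO}(n)/SD(n);\Z_2)$, which is precisely the content of Example \ref{exa5.try} (quoted from \cite{MT}, Theorem 5.9).

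There is no substantial obstacle here beyond assembling the references correctly; the one subtle point worth flagging is the reduction to a single conjugacy class. Concretely, any two maximal elementary abelian 2-subgroups $D, D' \leq G$ in the same conjugacy class are related by an inner automorphism $c_g$, and since $c_g$ acts trivially on $H^*(BG;\Z_2)$ and induces a homotopy equivalence $G/D \to G/D'$ compatible with the $j$-maps, surjectivity for one implies surjectivity for the other. This justifies invoking Example \ref{exa5.try} to cover every maximal elementary abelian 2-subgroup, completing the proof.
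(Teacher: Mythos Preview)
Your proposal is correct and follows essentially the same approach as the paper: the cases $\mathrm{U}(n)$, $\mathrm{SU}(n)$, $\mathrm{Sp}(n)$ are handled by Proposition~\ref{pro.abc} via the absence of 2-torsion, and the cases $\mathrm{O}(n)$, $\mathrm{SO}(n)$ are handled by Example~\ref{exa5.try}. Your explicit observation that $\lie{g}_- = 0$ forces $\beta = 0$, together with the conjugacy-class reduction, makes transparent a step the paper leaves implicit.
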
 

\begin{proof}
Since the integral cohomology of compact connected Lie groups $ \mathrm{U}(n) $, $ \mathrm{SU}(n) $ and $\mathrm{Sp}(n)$ have no 2-torsion (see \cite{MT}, Corollary 3.11), Proposition \ref{pro.abc} follows that $(\mathrm{U}(n),\mathrm{Id})$, $(\mathrm{SU}(n),\mathrm{Id})$, $(\mathrm{Sp}(n),\mathrm{Id})$ have free extension property. The result for $(\mathrm{O}(n),\mathrm{Id})$ and  $(\mathrm{SO}(n),\mathrm{Id})$ was established in Example \ref{exa5.try}.
\end{proof}

\begin{proposition}\label{pro5.try}
If $ (G,\phi) $ and $ (H,\psi) $ have the free extension property, then $ (G \times H,\phi \times \psi) $ also has the free extension property. 
\end{proposition}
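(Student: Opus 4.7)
The plan is to reduce the statement for the product involutive group to the free extension property of each factor via the K\"unneth formula over $\mathbb{Z}_2$. The three things to check are: (i) how the data entering Definition \ref{def5.orb} decomposes for the product, (ii) that maximal elementary abelian 2-subgroups of the product are themselves products, and (iii) that the map in question is a tensor product of the maps for the factors.

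First I would unwind the definitions. The fixed subgroup of $\phi \times \psi$ is plainly $(G \times H)^{\phi \times \psi} = G^{\phi} \times H^{\psi}$, and the Lie algebra $(-1)$-eigenspace decomposes as $(\mathfrak{g}\oplus\mathfrak{h})_{-} = \mathfrak{g}_{-} \oplus \mathfrak{h}_{-}$. Thus any $\beta \in (\mathfrak{g}\oplus\mathfrak{h})_{-}$ has the form $\beta = (\beta_1,\beta_2)$ with $\beta_i$ in the $(-1)$-eigenspace of the corresponding factor, and the stabilizer factors as
\begin{equation*}
(G \times H)^{\phi \times \psi}_{\beta} \;=\; G^{\phi}_{\beta_1} \times H^{\psi}_{\beta_2}.
\end{equation*}

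Next I would show that a maximal elementary abelian 2-subgroup $D \leq G^{\phi}_{\beta_1} \times H^{\psi}_{\beta_2}$ must be a product $D = D_1 \times D_2$ of maximal elementary abelian 2-subgroups $D_i$ of the factors. The argument is elementary: $D$ is contained in $\pi_1(D) \times \pi_2(D)$, which is again elementary abelian 2 (a product of such groups), so maximality forces equality $D = \pi_1(D) \times \pi_2(D)$; and if either $\pi_i(D)$ could be enlarged to $D_i' \leq G^{\phi}_{\beta_1}$ (resp.\ $H^{\psi}_{\beta_2}$), then $D_1' \times D_2$ (resp.\ $D_1 \times D_2'$) would strictly enlarge $D$, contradicting maximality.

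Finally, the map in question is $j^{*}\colon H^{*}(B(D_1 \times D_2);\mathbb{Z}_2) \rightarrow H^{*}((G^{\phi}_{\beta_1} \times H^{\psi}_{\beta_2})/(D_1 \times D_2);\mathbb{Z}_2)$. Since $B(D_1 \times D_2) \simeq BD_1 \times BD_2$ and the quotient splits as $G^{\phi}_{\beta_1}/D_1 \times H^{\psi}_{\beta_2}/D_2$, the inclusion of the fiber is the product $j_1 \times j_2$ of the corresponding inclusions for the factors. Because we are working with field coefficients $\mathbb{Z}_2$, K\"unneth gives a natural isomorphism under which $j^{*}$ becomes $j_1^{*} \otimes j_2^{*}$. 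By the hypothesis that $(G,\phi)$ and $(H,\psi)$ have the FEP, each $j_i^{*}$ is surjective, and the tensor product of surjective linear maps is surjective. This gives the result.

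There is no substantive obstacle: the only point that requires a small argument rather than bookkeeping is step (ii), observing that maximal elementary abelian 2-subgroups of a product are honest products, and this is immediate from the product structure and maximality.
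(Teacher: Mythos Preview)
Your proof is correct and follows essentially the same route as the paper: decompose the stabilizer as a product, observe that maximal elementary abelian $2$-subgroups of a product are products of maximal ones in the factors, and apply K\"unneth to conclude. You supply more detail than the paper does (particularly the projection argument in step (ii)), but the strategy is identical.
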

\begin{proof}
Given $ \beta=\beta_{1}+\beta_{2}\in (\mathfrak{g} \oplus \lie{h})_{-} = \mathfrak{g}_{-}  \oplus \mathfrak{h}_{-} $ then
 \begin{equation*} (G \times H)^{\phi \times \psi}_{\beta} = G_{\beta_{1}}^{\phi} \times H_{\beta_{2}}^{\psi}.
 \end{equation*}
 A maximal abelian 2-subgroup in $G_{\beta_{1}}^{\phi} \times H_{\beta_{2}}^{\psi}$ must be a product $D_{\beta_1} \times D_{\beta_2}$ of a maximal abelian 2-subgroups of the factors. Applying the Kunneth Theorem to $ G_{\beta_{1}}^{\phi}/D_{\beta_1}  \times H_{\beta_{2}}^{\psi}/D_{\beta_2} \rightarrow BD_{\beta_1} \times BD_{\beta_2} $ completes the proof.
\end{proof}

\begin{proposition}\label{pro5.zip}
Both $ (\mathrm{U}(n),\phi) $ and $ (\mathrm{SU}(n),\phi) $ have the free extension property, where $ \phi(A)=\overline{A} $ is entry-wise complex conjugation. 
\end{proposition}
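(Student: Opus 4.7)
The plan is to reduce both claims to Propositions \ref{rem5.wsw} and \ref{pro5.try} together with Remark \ref{rem5.orb}. First the linear algebra: for $G=\mathrm{U}(n)$ (respectively $\mathrm{SU}(n)$) and $\phi(A)=\overline{A}$, one has $G^\phi=\mathrm{O}(n)$ (resp.\ $\mathrm{SO}(n)$), while $\mathfrak{g}_-$ consists of purely imaginary symmetric matrices (additionally traceless in the $\mathfrak{su}$ case). Writing $\beta=iS\in\mathfrak{g}_-$ and diagonalizing $S$ by an element of $\mathrm{O}(n)$, up to conjugation the centralizer of $\beta$ in $\mathrm{O}(n)$ is the product $K:=\mathrm{O}(n_1)\times\cdots\times\mathrm{O}(n_k)$ over the multiplicities $n_i$ of the eigenvalues of $S$, while the centralizer in $\mathrm{SO}(n)$ is the index-two normal subgroup $K_0:=K\cap\mathrm{SO}(n)=S\bigl(\mathrm{O}(n_1)\times\cdots\times\mathrm{O}(n_k)\bigr)$. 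By Remark \ref{rem5.orb} it suffices to verify FEP for $(K,\mathrm{Id})$ in the unitary case and for $(K_0,\mathrm{Id})$ in the special unitary case.

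The unitary case is immediate: $(K,\mathrm{Id})$ has FEP by iterated application of Propositions \ref{rem5.wsw} and \ref{pro5.try}.

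The special unitary case is the main content. Let $D:=D(n_1)\times\cdots\times D(n_k)\leq K$ be the standard maximal elementary abelian 2-subgroup and set $D_0:=D\cap K_0$. First I would check that $D_0$ is a maximal elementary abelian 2-subgroup of $K_0$ and that every such subgroup is $K_0$-conjugate to $D_0$. Maximality follows because a strictly larger elementary abelian 2-subgroup of $K_0$ (hence of $K$) containing $D_0$ would have to equal some $K$-conjugate of $D$, yet $D$ contains elements of determinant $-1$ and thus cannot lie in $K_0$. The conjugacy statement follows by combining $K$-conjugacy of maximal subgroups of $K$ with the observation that any element of $D\setminus D_0\subseteq K\setminus K_0$ normalizes the abelian group $D_0$, allowing one to convert $K$-conjugation into $K_0$-conjugation. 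Next, since $D$ contains an element of determinant $-1$ one has $K=K_0\cdot D$, so the natural map $K_0/D_0\to K/D$ induced by inclusion is a bijection, hence a diffeomorphism. Applying $H^*(-;\Z_2)$ to the resulting commutative diagram of fiber inclusions
$$\xymatrix{K_0/D_0 \ar[r]^{j_0} \ar[d]^{\cong} & BD_0 \ar[d] \\ K/D \ar[r]^{j} & BD}$$
and using that $j^*$ is surjective by FEP of $K$, commutativity forces $j_0^*$ to be surjective. This establishes FEP for $(K_0,\mathrm{Id})$ at $D_0$, and by the conjugacy statement above, at every maximal elementary abelian 2-subgroup of $K_0$.

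The main obstacle is the first step of the special unitary argument, where the index-two subgroup $K_0\leq K$ could a priori exhibit richer conjugacy-class behavior than $K$; the argument works because of the very concrete description of $D$ as diagonal $\pm 1$ matrices and because $K_0$ is normal in $K$.
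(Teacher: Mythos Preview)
Your proof is correct and follows essentially the same approach as the paper: identify $G^\phi_\beta$ as a block product of orthogonal groups (or its determinant-one subgroup), and in the $\mathrm{SU}(n)$ case use the identification $K_0/D_0\cong K/D$ together with the commuting square of fibre inclusions to deduce surjectivity of $j_0^*$ from that of $j^*$. The only stylistic difference is that the paper, rather than first proving that all maximal elementary abelian $2$-subgroups of $K_0$ are $K_0$-conjugate, starts from an arbitrary maximal $SD\leq K_0$ and extends it to a maximal $D\leq K$ with $SD=D\cap K_0$; your conjugacy argument is a valid (and more explicit) alternative that supplies details the paper leaves implicit.
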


\begin{proof}
We start with the case $\mathrm{U}(n)$. We have $\mathrm{U}(n)^{\phi} = \mathrm{O}(n)$ and $\sqrt{-1} \beta$ is a real symmetric matrix for every $\beta \in \mathfrak{u}(n)_{-}$. By the Spectral Theorem, $\beta$ is orthogonally diagonalizable with imaginary eigenvalues. Therefore

\begin{equation*}
\mathrm{O}(n)_{\beta} \cong \mathrm{O}(k_{1}) \times \cdots \times \mathrm{O}(k_{p}).
\end{equation*}
where $k_1,\dots,k_p$ are the multiplicities of the eigenvalues of $\beta$, so $k_1+\dots,+k_p =n$. The result now follows from Propositions \ref{rem5.wsw}, \ref{pro5.try} and Remark \ref{rem5.orb}.

In the case $\mathrm{SU}(n)$ we obtain similarly $\mathrm{SU}(n)^{\phi} = \mathrm{SO}(n)$  and 

\begin{equation*}\label{equ5.ilo}
\mathrm{SO}(n)_{\beta} \cong  \mathrm{SO}(n) \cap (\mathrm{O}(k_{1}) \times \cdots \times \mathrm{O}(k_{p})).
\end{equation*}

Given a maximal elementary abelian $2$-group $SD \leq \mathrm{SO}(n)_{\beta}$, there exists a maximal elementary abelian 2-group $D \subseteq \mathrm{O}(n)_{\beta} $ such that $SD=D\cap \mathrm{SO}(n)_{\beta}$.
We obtain an equality of quotients $$F := \mathrm{O}(n)^{\phi}_{\beta}/D = \mathrm{SO}(n)_{\beta}/SD, $$ and a pullback of fibre bundles

\begin{equation}\label{dia5.zip}
 \xymatrix{\mathrm{SO}(n)_{\beta}/SD \ar[r]^{=}\ar[d]^{j} &\mathrm{O}(n)^{\phi}_{\beta}/D\ar[d]^{j'}  \\
BSD\ar[d]  \ar[r]  &BD \ar[d]\\
B\mathrm{SO}(n)_{\beta}  \ar[r]  &B\mathrm{O}(n)_{\beta} \\} 
 \end{equation} 
so surjectivity of $j^*$ follows from the surjectivity of $j'^*$ by commutativity.
\end{proof}

 \begin{proposition}\label{exa5.ali}
 Let $ G=\mathrm{U}(n)\times \mathrm{U}(n) $ and $ \phi: G \rightarrow G  $ be defined by $ \phi(A,B)=(\overline{B}, \overline{A}) $.  Then $(G,\phi)$ satisfies the free extension property.
 \end{proposition}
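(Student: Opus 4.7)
The plan is to identify the fixed subgroup $G^{\phi}$ and the $-1$-eigenspace $\lie{g}_{-}$ explicitly, compute the centralizer $G^{\phi}_{\beta}$ for $\beta\in \lie{g}_{-}$, and then invoke the reduction of Remark \ref{rem5.orb} together with the previously established free extension properties for the unitary groups.

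First I would observe that the map $\mathrm{U}(n) \hookrightarrow G$ sending $A \mapsto (A,\bar{A})$ identifies $\mathrm{U}(n) \cong G^{\phi}$, with differential identifying $\lie{u}(n) \cong \lie{g}_{+}$. A straightforward computation on eigenspaces of $\phi_{*}(X,Y) = (\bar{Y},\bar{X})$ shows that $X \mapsto (X,-\bar{X})$ identifies $\lie{u}(n) \cong \lie{g}_{-}$.

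Next, for $\beta = (X,-\bar{X}) \in \lie{g}_{-}$, an element $(A,\bar{A}) \in G^{\phi}$ lies in $G^{\phi}_{\beta}$ precisely when $AXA^{-1} = X$ (the second condition $\bar{A}\bar{X}\bar{A}^{-1} = \bar{X}$ is the complex conjugate of the first, hence automatic). Therefore $G^{\phi}_{\beta} \cong \mathrm{U}(n)_{X}$, the centralizer in $\mathrm{U}(n)$ of $X \in \lie{u}(n)$. By the Spectral Theorem, $X$ is unitarily diagonalizable with purely imaginary eigenvalues, so up to conjugation
\begin{equation*}
\mathrm{U}(n)_{X} \cong \mathrm{U}(k_{1}) \times \cdots \times \mathrm{U}(k_{p}),
\end{equation*}
where $k_{1},\ldots,k_{p}$ are the multiplicities of the distinct eigenvalues of $X$.

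Finally, Proposition \ref{rem5.wsw} gives that each $(\mathrm{U}(k_{i}),\mathrm{Id})$ has FEP; Proposition \ref{pro5.try} then gives that the product $(\mathrm{U}(k_{1}) \times \cdots \times \mathrm{U}(k_{p}),\mathrm{Id})$ has FEP; and by Remark \ref{rem5.orb}, verifying FEP for $(G^{\phi}_{\beta},\mathrm{Id})$ for every $\beta \in \lie{g}_{-}$ suffices to conclude that $(G,\phi)$ has the free extension property. The argument closely parallels the proof of Proposition \ref{pro5.zip}, with $\mathrm{O}(n)$ replaced by $\mathrm{U}(n)$ as the fixed subgroup; no step presents a real obstacle, since the entire work is in correctly identifying $G^{\phi}_{\beta}$ and then applying the packaged reductions.
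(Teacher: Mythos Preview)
Your proposal is correct and follows essentially the same approach as the paper: identify $G^{\phi}\cong\mathrm{U}(n)$ and $\lie{g}_{-}$ explicitly, compute $G^{\phi}_{\beta}$ as the centralizer of $X$ in $\mathrm{U}(n)$, use the Spectral Theorem to recognize it as a product of unitary groups, and then invoke Propositions \ref{rem5.wsw}, \ref{pro5.try} and Remark \ref{rem5.orb}. The arguments are virtually identical in structure and detail.
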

 
 \begin{proof}
 We have $G^{\phi}=\{(A, \overline{A})\ |\ A\in \mathrm{U}(n)\}\cong \mathrm{U}(n)$ and $ \mathfrak{g}_{-}=\{ (X, -\overline{X}) \ |\ X\in \lie{u}(n)\}$.

Given $ \beta=  (X, -\overline{X})\in \mathfrak{g}_{-} $, observe that $AX A^{-1} = X$ if and only if $\overline{A} (-\overline{X}) \overline{A}^{-1} = -\overline{X}$.  Therefore $G_\beta^{\phi}$ is isomorphic to the centralizer of $X$ in $\mathrm{U}(n)$. Since $X$ is skew-Hermitian, the Spectral Theorem implies $X$ is orthogonally diagonalizable, so  
 \begin{equation}\label{equ5.gel}
  G^{\phi}_{\beta} \cong \mathrm{U}(k_{1}) \times \cdots \times \mathrm{U}(k_{p}).
  \end{equation} 
for some natural numbers $ k_{1},\dots,k_{p} $ with $ k_{1}+\cdots +k_{p}=n $. The proof now follows from Propositions \ref{rem5.wsw}, \ref{pro5.try} and Remark \ref{rem5.orb}.
\end{proof}

 \begin{proposition}\label{exa5.ali}
Consider the projective unitary group $ G=\mathrm{PU}(4n+2)$ for $n \geq 1$ and $ \phi: G \rightarrow G  $ be defined by $ \phi(A)= \overline{A} $.  Then $(G,\phi)$ does \underline{not} satisfy the free extension property.
 \end{proposition}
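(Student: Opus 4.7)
The plan is to show that FEP already fails at $\beta=0$, which by Remark \ref{rem5.orb} reduces to checking that $(G^{\phi},\mathrm{Id})$ fails FEP.

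First, I identify $G^{\phi}$ explicitly. For $[A]\in\mathrm{PU}(4n+2)^{\phi}$ one has $\overline{A}=\lambda A$ for some $\lambda\in U(1)$, and choosing $\mu\in U(1)$ with $\mu^{2}\lambda = 1$ rescales $A$ to lie in $\mathrm{O}(4n+2)$, the residual freedom being $\{\pm 1\}$. Thus $G^{\phi}\cong\mathrm{PO}(4n+2):=\mathrm{O}(4n+2)/\{\pm I\}$.

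Next I construct a large elementary abelian $2$-subgroup of $\mathrm{PO}(4n+2)$ via ``Pauli-matrix'' lifts. Partition the $4n+2$ coordinates into $2n+1$ blocks of size two; for each block $i$ let $R_{i}\in\mathrm{O}(4n+2)$ be the coordinate-swap reflection and let $D_{k}$ denote the diagonal matrix with $-1$ in position $k$. In $\mathrm{O}(4n+2)$ one has $R_{i}D_{k}=-D_{k}R_{i}$ whenever $k$ lies in block $i$, so the anti-commutator vanishes in $\mathrm{PO}$, and all generators square to the identity; together with the single relation $\prod_{k}[D_{k}]=[-I]=[I]$, this produces
\[
D:=\langle [R_{1}],\ldots,[R_{2n+1}],\,[D_{1}],\ldots,[D_{4n+2}]\rangle \leq \mathrm{PO}(4n+2),
\]
elementary abelian of rank $(2n+1)+(4n+2)-1=6n+2$, strictly exceeding the rank $4n+1$ of the image of the diagonal subgroup. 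If necessary, enlarge $D$ to a maximal such subgroup.

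Finally, by Proposition \ref{pro5.aba} the FEP at this $D$ is equivalent to $Bi^{\ast}:H^{\ast}(B\mathrm{PO}(4n+2);\mathbb{Z}_{2})\to H^{\ast}(BD;\mathbb{Z}_{2})$ being a free extension together with trivial monodromy of $\pi_{1}(B\mathrm{PO}(4n+2))=\mathbb{Z}_{2}$ on $H^{\ast}(\mathrm{PO}(4n+2)/D;\mathbb{Z}_{2})$. I would rule out freeness by a Poincar\'e series argument: the computation of $H^{\ast}(B\mathrm{PO}(4n+2);\mathbb{Z}_{2})$ via the Serre spectral sequence of the central extension $B\{\pm I\}\to B\mathrm{O}(4n+2)\to B\mathrm{PO}(4n+2)$ exhibits an ``exotic'' generator (reflecting the cyclic $\mathbb{Z}_{4}$ center of $\mathrm{Spin}(4n+2)$, which occurs precisely because $4n+2\equiv 2\pmod 4$) whose restriction to $H^{\ast}(BD;\mathbb{Z}_{2})$ is decomposable, preventing freeness. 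The principal obstacle is this last cohomological computation, and the hypothesis $n\geq 1$ is precisely what is needed to place the exotic generator in the relevant degree range.
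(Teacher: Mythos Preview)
Your reduction to $\beta=0$ and identification $G^{\phi}\cong\mathrm{PO}(4n+2)$ are correct, but the final step contains a genuine gap. The assertion that an ``exotic generator whose restriction to $H^{*}(BD;\mathbb{Z}_{2})$ is decomposable'' prevents freeness is simply false as stated: in Example~\ref{exa.rst} the generators $c_{i}\in H^{*}(BT;\mathbb{Z}_{2})$ restrict to the decomposable elements $x_{i}^{2}\in H^{*}(BE(n);\mathbb{Z}_{2})$, and yet that map \emph{is} a free extension. Decomposability of the image of a generator says nothing about whether the target is free as a module over the source. You yourself flag this step as ``the principal obstacle,'' and indeed no Poincar\'e series argument is sketched; the proposal does not actually establish that $Bi^{*}$ fails to be a free extension.

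The paper's proof bypasses all of this with a one-line observation you are missing: if $(G,\phi)$ had the free extension property, then by Proposition~\ref{pro5.aba} the map $H^{*}(BG^{\phi};\mathbb{Z}_{2})\to H^{*}(BD;\mathbb{Z}_{2})$ would be a free extension, hence injective, forcing $H^{*}(BG^{\phi};\mathbb{Z}_{2})$ to embed in a polynomial ring and therefore to be an integral domain. But Kono--Mimura \cite{KM} show that $H^{*}(B\mathrm{PO}(4n+2);\mathbb{Z}_{2})$ contains zero divisors. That is the whole argument; your explicit construction of a rank-$(6n+2)$ elementary abelian subgroup, while correct, plays no role, since the integral-domain obstruction is independent of which maximal $D$ one chooses.
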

 
\begin{proof}
A necessary condition for $(G,\phi)$ to have the free extension property is that $H^*(BG^\phi_\beta;\Z_2)$ be an integral domain for all $\beta \in \lie{g}_-$, for otherwise it can't possibly inject into the polynomial ring $H^*(BD_{\beta}; \Z_2)$. 
The real subgroup is $\mathrm{PU}(4n+2)^{\phi} = \mathrm{PO}(4n+2)$ and the ring $H^*(B\mathrm{PO}(4n+2);\Z_2)$ contains zero divisors (see \cite{KM} Proposition 5.8).
 \end{proof}

\section{An Atiyah-Bott Lemma}\label{ABsect}

In this section we prove a version of Atiyah-Bott Lemma (Proposition 13.4 \cite{AB2}) for $\mathbb{Z}_{2} $-cohomology. 
 
\begin{definition}\label{def6.wsw}
Let $ E \rightarrow X $ be a $G$-equivariant real vector bundle where $G$ is a compact Lie group. We say $E$ is \textbf{2-primitive} if there exists an elementary abelian 2-subgroup $D_{0}$ of $G$ that acts trivially on $X$ and fixes no nonzero vectors in $E$.
\end{definition}

\begin{proposition}[\textbf{Atiyah-Bott Lemma for $\mathbb{Z}_{2}$-cohomology}]\index{real Atiyah-Bott Lemma for $\mathbb{Z}_{2}$-cohomology}\label{pro6.gay}
Let $G$ be a compact Lie group and  $ \pi:E \rightarrow X $ be a $G$-equivariant vector bundle of rank $m$  over a   connected  manifold $X$.  If $(G,Id)$ has the free extension property and the $G$-vector bundle is 2-primitive, then the equivariant top Stiefel-Whitney class $w_{m}^{G}(E):= w_m(E_G)$ is not a zero-divisor in $ H_{G}^{*}(X;\mathbb{Z}_{2}) $.    
\end{proposition}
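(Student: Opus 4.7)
The plan is to reduce to a maximal elementary abelian $2$-subgroup $D$ of $G$ via the free extension property, and then to exploit a splitting $D = D_0 \times D_1$ in which the top equivariant Stiefel-Whitney class becomes an explicit polynomial in $H^*(BD_0;\mathbb{Z}_2)$ whose leading part is visibly a non-zero-divisor. First, enlarge $D_0$ to a maximal elementary abelian $2$-subgroup $D \leq G$ and pick a complement $D_1$ so that $D = D_0 \times D_1$. Since $(G,\mathrm{Id})$ has the FEP, Proposition \ref{pro5.orb} supplies an injection $r^{*}: H^{*}_{G}(X;\mathbb{Z}_2) \hookrightarrow H^{*}_{D}(X;\mathbb{Z}_2)$, and by naturality $r^{*}(w_m^{G}(E)) = w_m^{D}(E)$, so it suffices to prove that $w_m^{D}(E)$ is not a zero-divisor in $H^{*}_{D}(X;\mathbb{Z}_2)$.

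Next, I would unpack the Borel construction using the product decomposition $ED = ED_0 \times ED_1$. Since $D_0$ acts trivially on $X$, there is a diffeomorphism $X_D \cong X_{D_1} \times BD_0$, and the Kunneth theorem yields a ring isomorphism $H^{*}_{D}(X;\mathbb{Z}_2) \cong S[x_1,\dots,x_k]$, where $S := H^{*}_{D_1}(X;\mathbb{Z}_2)$ and $H^{*}(BD_0;\mathbb{Z}_2) = \mathbb{Z}_2[x_1,\dots,x_k]$. Because $D$ is abelian, the $D_0$-isotypical decomposition $E = \bigoplus_{\chi \neq \chi_0} E_\chi$ is $D$-equivariant, and no trivial character appears by the $2$-primitivity hypothesis. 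A two-step analysis of the Borel construction (quotient by $D_0$, then by $D_1$) identifies
\begin{equation*}
(E_\chi)_D \;\cong\; (E_\chi)_{D_1} \boxtimes L_\chi
\end{equation*}
on $X_{D_1} \times BD_0$, where $L_\chi \to BD_0$ is the real line bundle associated to $\chi$, satisfying $w_1(L_\chi) = \chi \in H^{1}(BD_0;\mathbb{Z}_2)$.

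The splitting-principle formula for the Stiefel-Whitney class of a tensor product with a line bundle then gives
\begin{equation*}
w_m^{D}(E) \;=\; \prod_{\chi \neq \chi_0} \Bigl( \chi^{k_\chi} + w_1\bigl((E_\chi)_{D_1}\bigr)\,\chi^{k_\chi - 1} + \cdots + w_{k_\chi}\bigl((E_\chi)_{D_1}\bigr) \Bigr),
\end{equation*}
where $k_\chi = \mathrm{rank}(E_\chi)$ and $\sum_\chi k_\chi = m$. The component of highest total $x$-degree is $P := \prod_{\chi \neq \chi_0} \chi^{k_\chi}$, which is nonzero in the integral domain $\mathbb{Z}_2[x_1,\dots,x_k]$ since each $\chi$ is nonzero there. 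Because $S[x_1,\dots,x_k]$ is free as a $\mathbb{Z}_2[x_1,\dots,x_k]$-module (any $\mathbb{Z}_2$-basis of $S$ is a basis), multiplication by $P$ is injective on $S[x_1,\dots,x_k]$. If $w_m^{D}(E)\cdot q = 0$ for some $q \neq 0$, comparing top-$x$-degree components gives $P \cdot Q = 0$ for the leading part $Q \neq 0$ of $q$, a contradiction. The main technical obstacle is the bundle identification $(E_\chi)_D \cong (E_\chi)_{D_1}\boxtimes L_\chi$; once this is in hand, the rest is Stiefel-Whitney calculus combined with the linear-algebra observation about free modules over polynomial rings.
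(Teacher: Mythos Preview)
Your proof is correct and follows the same overall strategy as the paper: reduce to a maximal elementary abelian $2$-subgroup $D \supseteq D_0$ via the free extension property, split $D = D_0 \times D_1$, apply K\"unneth to write $H^*_D(X;\mathbb{Z}_2) \cong H^*(BD_0;\mathbb{Z}_2) \otimes H^*_{D_1}(X;\mathbb{Z}_2)$, and show that the component of $w_m^D(E)$ of top degree in the $H^*(BD_0;\mathbb{Z}_2)$-factor is a nonzero element of that polynomial ring, which forces $w_m^D(E)$ to be a non-zero-divisor by the leading-term argument you give.

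The one meaningful difference is in how that leading term is computed. You carry out a global $D_0$-isotypical decomposition $E = \bigoplus_\chi E_\chi$, establish the identification $(E_\chi)_D \cong (E_\chi)_{D_1} \boxtimes L_\chi$, and apply the Stiefel--Whitney formula for tensoring with a line bundle; this produces an explicit expression for $w_m^D(E)$ whose top term is $\prod_\chi \chi^{k_\chi}$. The paper bypasses this entirely: since the leading component $\alpha_0 \in H^m(BD_0;\mathbb{Z}_2)$ is detected by restriction along $BD_0 \hookrightarrow X_D$ over any single point $x \in X$, it suffices to decompose the $D_0$-representation $E_x$ into one-dimensional summands and apply the Whitney sum formula directly, obtaining the same product $\prod_i w_1^{D_0}(E_x^i)$. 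This fiberwise shortcut avoids precisely the bundle identification you flag as the main technical obstacle; your route gives more (the full formula for $w_m^D(E)$), but only the leading term is needed for the argument.
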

\begin{proof}
By the assumptions, the vector bundle $ E\rightarrow X $ is 2-primitive, so there exists an elementary abelian 2-subgroup $D_{0}$ of $G$ that acts trivially on $X$ and fixes no nonzero vectors in $E$. Choose  a maximal elementary abelian 2-subgroup $D$ containing $D_{0}$. The inclusion map   $  i:D \hookrightarrow G $ induces the following commutative diagram of homotopy quotients:

\begin{equation}\label{dia6.wsw}
 \xymatrix{X_{D} \ar[r]^{i_{D}}\ar[d]^{q_{D}} &X_{G}\ar[d]^{q_{G}}  \\
BD  \ar[r]^{Bi} &BG \\} 
 \end{equation}

By functoriality of Stiefel-Whitney classes,
\begin{equation*}
w_{m}^{D}(E)=i_{D}^{*}(w_{m}^{G}(E)).
\end{equation*}
Since $G$ has the free extension property, Proposition \ref{pro5.orb} implies that $ i^{*}_{D}$  is injective. Thus, it suffices to show $ w_{m}^{D}(E)$ is not a zero divisor in $H_D^*(X;\Z_2)$.

Choose a complementary elementary abelian subgroup $ D_{1}\subset D $  such that   $ D=D_{0}\times D_{1} $  where $ D_{1}=(\mathbb{Z}_{2})^{q} $ and  $ p+q=n $. The action of $ D_{0} $ on $X$ is trivial, so we have a homeomorphism
\begin{equation*}
X_{D}\cong BD_{0} \times X_{D_{1}}.
\end{equation*}
By the Kunneth formula,
\begin{equation*}
H^{*}_{D}(X;\mathbb{Z}_{2}) \cong H^{*}(BD_{0};\mathbb{Z}_{2}) \otimes_{\mathbb{Z}_{2}} H^{*}_{D_{1}}(X;\mathbb{Z}_{2}).
\end{equation*}
 This formula  makes   $ H^{*}_{D}(X;\mathbb{Z}_{2}) $ into a bigraded ring.  
Decompose
\begin{equation*}
w_{m}^{D}(E)=\alpha_{0}\otimes 1+\alpha'.
\end{equation*}
where  $ \alpha_{0}\in H^{m}(BD_{0};\mathbb{Z}_{2})$  and $ \alpha'\in \bigoplus_{i=1}^{m} H^{m-i}(BD_{0};\mathbb{Z}_{2}) \otimes_{\mathbb{Z}_{2}} H^{i}_{D_{1}}(X;\mathbb{Z}_{2}) $. Because $H^{*}(BD_{0};\mathbb{Z}_{2}) \cong \Z_2[x_1,\dots,x_p]$ is an integral domain, in order to prove  $w_{m}^{D}(E)$ is not a zero divisor, it suffices to show that $\alpha_0$ is non-zero.

Choose a point $ x\in X $. The inclusion maps $ i_{x}:  \lbrace x\rbrace \hookrightarrow X $  and $ i_{0}:D_{0}\hookrightarrow D $ induce the following pullback diagram:

\begin{equation}\label{dia6.gay}
 \xymatrix{(E_{x})_{D_{0}} \ar[r]\ar[d] &E_{D_{0}}\ar[d]\ar[r] & E_{D}\ar[d]\\
BD_{0} \ar[r]^{Bi_{x}} &X_{D_{0}}\ar[r]^{i_{D_{0}}} & X_{D}\\} 
 \end{equation}

where $ E_{x} $ is the fiber over $ x $. Since $ \alpha_{0} $ is the component  of $ w_{m}^{D}(E) $ in $ H^{*}(BD_{0};\mathbb{Z}_{2}) $, it follows from Diagram \ref{dia6.gay} that
\begin{equation*}
 \alpha_{0}=(i_{D_{0}}\circ Bi_{x})^{*}\Big (w_{m}^{D}(E)\Big ).
 \end{equation*} 
Thus,  $ \alpha_{0} $ is the   top Stiefel-Whitney  class of the vector bundle $ (E_{x})_{D_{0}}\rightarrow BD_{0} $.  Since $ D_{0} $ fixes $ X $, we see that $ E_{x} $ is a representation of $D_{0} \cong \Z_2^p$, which therefore decomposes into a direct sum of 1-dimensional $D_0$-representations $E_x^i$, determining a decomposition into line bundles
\begin{equation*}
 (E_{x})_{D_{0}}=(E_{x}^{1})_{D_{0}} \oplus \cdots \oplus (E_{x}^{m})_{D_{0}}.
\end{equation*}
The Whitney sum formula yields
\begin{equation*}\label{equ6.ali}
\alpha_{0}=w_{m}^{D_{0}}(E_{x})= \prod_{i=1}^{m}  w_{1}^{D_{0}}(E^{i}_{x}),
\end{equation*}
Because $E$ is $2$-primitive, each representation $E_x^i$ is non-trivial, hence the line bundles $(E_{x}^{i})_{D_{0}}$ are nontrivial and 
\begin{equation*}
w_{1}^{D_{0}}(E^{i}_{x}) \neq 0,\ \forall i=1,\dots,m.
\end{equation*}
It follows from  (\ref{equ6.ali}) that $ \alpha_{0}  \neq 0 $. This completes the proof.
\end{proof}

\section{Real Equivariant Perfection}\label{perfectsect}

Let $\mathcal{RH}=(M,\omega,G,\mu,\sigma,\phi)$ be a real Hamiltonian system where $G$ is compact and connected and $f$ is proper.  By Proposition \ref{pro4.zip} there exists a collection of real Hamiltonian  subsystems $ \mathcal{RH}_{I}=(Z_{I},\omega,G_{I},\mu_{I},\sigma,\phi) $  indexed by $\mathcal{I}:= \{ I=(\beta_i, m) \}$ such that the critical set of $f^{\sigma}$ is
\begin{equation}\label{equ7.wsw}
C_{f^{\sigma}}=\coprod_{I} G^{\phi}\times _{G^{\phi}_I} M_I^{\sigma}.
\end{equation}
where $M_I^{\sigma} := \mu^{-1}_I(0) \cap Z_I^{\sigma}$.

\begin{definition}\label{def7.wsw}
 A real Hamiltonian system $ \mathcal{RH} $ is called \textbf{2-primitive} if for every generated subsystem  $ \mathcal{RH}_{I}$, the negative normal bundle of  $M_{I}^{\sigma}$ is a  2-primitive $G^{\phi}_{I}$-bundle (see Definition \ref{def6.wsw}).  
\end{definition}

\begin{theorem}[\textbf{Real Equivariant Perfection}]\index{Real Equivariant Perfection Theorem}\label{the7.wsw}
Let $ (M,\omega,G,\mu,\sigma,\phi) $ be a 2-primitive real Hamiltonian system where $G$ is compact and connected and $f$ is proper. If $(G,\phi)$ has the free extension property, then $f^{\sigma}$ is equivariantly perfect over $ \mathbb{Z}_{2}$. That is
\begin{align}\label{equ7.gal}
\mathbf{P}_t^{G^{\phi}}\boldsymbol{(}M^{\sigma};\mathbb{Z}_{2}\boldsymbol{)}= 
\sum_{I} t^{d_I}\mathbf{P}_t^{G^{\phi}}\boldsymbol{(}C_{I}^\sigma;\Z_2\boldsymbol{)},
\end{align}
where $C_{I}^\sigma   $ are critical subsets of $ f^{\sigma} $ and $ d_I $ is the Morse index of $ f^{\sigma} $ along $ C^{\sigma}_I $. 
 \end{theorem}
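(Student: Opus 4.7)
The plan is to induct on a total order refining the partial order on $\mathcal{I}$, which exists because $f$ is proper so each sublevel set meets only finitely many strata. For each $I \in \mathcal{I}$, set $U_I := \bigcup_{J \leq I} S^{\sigma}_J$ and $V_I := \bigcup_{J < I} S^{\sigma}_J$; by Proposition \ref{pro4.wsw}(iii) both are $G^{\phi}$-invariant open subsets of $M^{\sigma}$ with $S^{\sigma}_I = U_I \setminus V_I$ closed in $U_I$. The equivariant Thom-Gysin long exact sequence of this pair reads
$$\cdots \to H^{*-d_I}_{G^{\phi}}(S^{\sigma}_I;\Z_2) \xrightarrow{i_I} H^*_{G^{\phi}}(U_I;\Z_2) \to H^*_{G^{\phi}}(V_I;\Z_2) \to \cdots,$$
where $i_I$ is the Thom pushforward for the negative normal bundle $\nu_I$ of $S^{\sigma}_I$ in $M^{\sigma}$. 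The goal is to show $i_I$ is injective for every $I$; the long exact sequence then breaks into the short exact sequence (\ref{equ1.vip}) and yields the identity $\mathbf{P}_t^{G^{\phi}}\boldsymbol{(}U_I\boldsymbol{)} = \mathbf{P}_t^{G^{\phi}}\boldsymbol{(}V_I\boldsymbol{)} + t^{d_I}\mathbf{P}_t^{G^{\phi}}\boldsymbol{(}S^{\sigma}_I\boldsymbol{)}$, from which (\ref{equ7.gal}) follows by induction and passage to the direct limit $M^{\sigma} = \bigcup_I U_I$.

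To prove $i_I$ is injective, I would use the standard fact that the composition of $i_I$ with the restriction $H^*_{G^{\phi}}(U_I;\Z_2) \to H^*_{G^{\phi}}(S^{\sigma}_I;\Z_2)$ is multiplication by the equivariant top Stiefel-Whitney class $w_{d_I}^{G^{\phi}}(\nu_I)$. It therefore suffices to show that this class is a non-zero-divisor in $H^*_{G^{\phi}}(S^{\sigma}_I;\Z_2)$. The $G^{\phi}$-equivariant deformation retract $S^{\sigma}_I \simeq C^{\sigma}_I$ from Proposition \ref{pro4.wsw}(ii), combined with the diffeomorphism $C^{\sigma}_I \cong G^{\phi}\times_{G^{\phi}_I} M^{\sigma}_I$ from Proposition \ref{pro4.zip}, identifies $H^*_{G^{\phi}}(S^{\sigma}_I;\Z_2) \cong H^*_{G^{\phi}_I}(M^{\sigma}_I;\Z_2)$, and reduces the claim to showing that $w_{d_I}^{G^{\phi}_I}(\nu_I|_{M^{\sigma}_I})$ is a non-zero-divisor in $H^*_{G^{\phi}_I}(M^{\sigma}_I;\Z_2)$.

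This last statement is exactly the conclusion of the $\Z_2$-Atiyah-Bott Lemma (Proposition \ref{pro6.gay}), whose two hypotheses both hold here. First, since $(G,\phi)$ has the free extension property, Remark \ref{rem5.orb} yields that $(G^{\phi}_I,\mathrm{Id})$ also has FEP. Second, the 2-primitivity hypothesis on $\mathcal{RH}$ (Definition \ref{def7.wsw}) is precisely the assertion that $\nu_I|_{M^{\sigma}_I}$ is a 2-primitive $G^{\phi}_I$-bundle. I expect the main obstacle to be the careful setup of the equivariant Thom-Gysin sequence for the possibly non-compact and non-orientable real normal bundle over $\Z_2$ coefficients, and the verification that $i_I$ agrees with multiplication by $w_{d_I}^{G^{\phi}}(\nu_I)$ after pulling back along the deformation retract; once these foundational identifications are in place, the induction over $\mathcal{I}$ and the direct-limit argument are routine.
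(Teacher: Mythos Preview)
Your proposal is correct and follows essentially the same route as the paper's proof: extend the partial order on $\mathcal{I}$ to a total order, set up the Thom--Gysin sequence for the pair $(\bigcup_{J\leq I}S^{\sigma}_J,\bigcup_{J<I}S^{\sigma}_J)$, reduce injectivity of $i_I$ to the non-zero-divisor property of $w_{d_I}^{G^{\phi}}(\nu_I)$, pass to $G^{\phi}_I$-equivariant cohomology of $M^{\sigma}_I$ via Propositions~\ref{pro4.wsw} and~\ref{pro4.zip}, and invoke Proposition~\ref{pro6.gay}. Your explicit appeal to Remark~\ref{rem5.orb} to verify that $(G^{\phi}_I,\mathrm{Id})$ has FEP is a point the paper leaves implicit, and your direct-limit remark is a harmless precaution since properness of $f$ makes $\mathcal{I}$ locally finite.
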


\begin{proof}
By  Proposition \ref{pro4.wsw}, the critical set of $f^{\sigma}$ is decomposes into a collection  of disjoint closed $ G^{\phi}$-invariant subsets $C^{\sigma}_{I}$ and there is a smooth invariant Morse stratification such that each stratum $S^{\sigma}_{I}$ has a constant codimension $ d_I $ and deformation retracts onto the corresponding critical subset  $ C^{\sigma}_{I}$. Consider  the generated  real Hamiltonian subsystems $ (Z_{I},\omega,G_{I},\mu_{I},\sigma,\phi) $. By Proposition \ref{pro4.zip}, we have   
\begin{equation}\label{equ7.hkj}
 C^{\sigma}_{I}\cong G^{\phi}\times _{G_{I}} M_I^\sigma.
 \end{equation} 
 
Extend the partial ordering of Proposition \ref{pro4.wsw} (iii) to a total ordering on the index set $\mathcal{I}$ satisfying the condition that $\overline{S^\sigma_{I}} \subseteq \cup_{J \geq I} S^\sigma_{I}$. Define a topological filtration of $M^{\sigma}$ by $S^{\sigma}_{\leq I} := \bigcup _{J \leq I} S_{J}^\sigma$ and let $S^{\sigma}_{< I} := \bigcup _{J < I} S_{J}^{\sigma}$. Let $\nu_I$ be  the normal bundle of $S^{\sigma}_I$ in $S^{\sigma}_{\leq I}$ which we identify with a tubular neighborhood, and let $\nu_I^* = \nu_I \setminus S^{\sigma}_{I} $. Consider the commutative diagram
\begin{equation}\label{equ7.raw}
\xymatrix{ \cdots \ar[r] &  H_{G^{\phi}}^{*} (S^{\sigma}_{\leq I}, S^{\sigma}_{< I};\Z_2) \ar[r]^{\Phi_I} \ar[d]^{\cong}_{\psi_1}  & H_{G^{\phi}}^*(S^{\sigma}_{\leq I};\Z_2) \ar[r] \ar[d] & H_{G^{\phi}}^{*}(S^{\sigma}_{< I};\Z_{2}) \ar[r] \ar[d] & \cdots \\
 \cdots \ar[r] & H_{G^{\phi}}^{*} (\nu_{I}, \nu_I^* ;\Z_2)  \ar[d]^{\cong}_{\psi_2}  \ar[r] & H_{G^{\phi}}^{*} (\nu_{I};\Z_2) \ar[d]^\cong_{\psi_3} \ar[r]  &  H_{G^{\phi}}^{*} (\nu_{I}^*;\Z_2)  \ar[r] & \cdots \\
 & H_{G^{\phi}}^{*-d_I} (S^{\sigma}_{I};\Z_2)  \ar[r]^{\psi_4} & H_{G^{\phi}}^{*} (S^{\sigma}_{I};\Z_2). &  & } 
\end{equation}
In this diagram, the first and second rows are long exact sequences for the pairs $(S^{\sigma}_{\leq I}, S^{\sigma}_{< I})$ and $(\nu_{I}, \nu_I^*)$ respectively, $\psi_1$ is an excision isomorphism, $\psi_2$ is the Thom isomorphism and $\psi_3$ is homotopy equivalence isomorphism. The morphism $\psi_4$ is defined by taking cup product with the top Stiefel-Whitney class $w_m^{G^{\phi}}(\nu_I)$ which plays the role of the ``mod 2 Euler class". Commutativity implies that the map $ \Phi_{I} $ is injective if $ w_m^{G^{\phi}}(\nu_I) $ is not a zero divisor. 

Since  $ S^{\sigma}_{I}$ deformation retracts onto $ C^{\sigma}_{I} \cong G^{\phi} \times_{G^\phi_I}(\mu^{-1}_I(0)^{\sigma})$, it follows that  
\begin{equation}\label{equ7.spt}
 H_{G^{\phi}}^{*}(S^{\sigma}_{I};\mathbb{Z}_{2}) \cong  H_{G^{\phi}_{I}}^{*}( M_I^{\sigma};\mathbb{Z}_{2}).
\end{equation}
The negative normal bundle $\xi_I$ of  $\mu^{-1}_{I}(0)^{\sigma}$ is just the restriction of $\nu_I$, so (\ref{equ7.spt}) sends $w^{G^{\phi}}_m (\nu_{I})$ to $ w^{G^{\phi}_I}_m(\xi_I)$.

By the assumption, $\xi_I$ is 2-primitive with respect to the $G^{\phi}_{I}$-action so by the real Atiyah-Bott Lemma, $ w^{G^{\phi}_I}_m(\xi_I)$ is not a zero divisor and neither is $w^{G^{\phi}}_m  (\nu_{I})$. Applying to (\ref{equ7.raw}) we get short exact sequences
\begin{equation*}\label{equ7.ali}
\xymatrix{ 0\ar[r] &  H_{G^{\phi}}^{*} (S^{\sigma}_{\leq I}, S^{\sigma}_{< I};\Z_2) \ar[r]  & H_{G^{\phi}}^*(S^{\sigma}_{\leq I};\Z_2) \ar[r] & H_{G^{\phi}}^{*}(S^{\sigma}_{< I};\Z_{2}) \ar[r] & 0}
\end{equation*}
yielding identities 
$$  \mathbf{P}_t^{G^\phi} \boldsymbol{(}S^{\sigma}_{\leq I}\boldsymbol{)} = \mathbf{P}_t^{G^\phi} \boldsymbol{(}S^{\sigma}_{< I}\boldsymbol{)} + t^{d_I} \mathbf{P}_t^{G^\phi} \boldsymbol{(}S^{\sigma}_{I}\boldsymbol{)}  $$ 
for all $I$. Applying induction yields 

\begin{align*}
\mathbf{P}_t^{G^{\phi}}\boldsymbol{(}M^{\sigma}\boldsymbol{)}= 
\sum_{I} t^{d_I}\mathbf{P}_t^{G^{\phi}}\boldsymbol{(}S_{I}^\sigma\boldsymbol{)}.
\end{align*}
Combine with $\mathbf{P}_t^{G^{\phi}}\boldsymbol{(}S_{I}^\sigma\boldsymbol{)} = \mathbf{P}_t^{G^{\phi}}\boldsymbol{(}C_{I}^\sigma\boldsymbol{)} = \mathbf{P}_t^{G^{\phi}_I}\boldsymbol{(}M_{I}^\sigma\boldsymbol{)}$ to complete the proof.
\end{proof}

Under the hypotheses of Theorem \ref{the7.wsw} we get a real version of Kirwan Surjectivity:

\begin{corollary}
 The natural map $ \kappa_{\mathbb{R}}: H^{*}_{G^{\phi}}(M^{\sigma};\mathbb{Z}_{2}) \rightarrow H^{*}_{G^{\phi}}(M_{0}^{\sigma};\mathbb{Z}_{2})$ is surjective. 
\end{corollary}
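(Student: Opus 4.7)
The plan is to extract the corollary directly from the short exact sequences
\begin{equation*}
0 \to H^{*-d_I}_{G^\phi}(S^\sigma_I;\Z_2) \to H^*_{G^\phi}(S^\sigma_{\leq I};\Z_2) \xrightarrow{r_I} H^*_{G^\phi}(S^\sigma_{<I};\Z_2) \to 0
\end{equation*}
constructed in the proof of Theorem \ref{the7.wsw}. Each map $r_I$ is induced by the inclusion $S^\sigma_{<I} \hookrightarrow S^\sigma_{\leq I}$, so stepping down the filtration one stratum at a time produces a surjection on equivariant $\Z_2$-cohomology at every stage.

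Next I would identify the minimum stratum. The chosen total order on $\mathcal{I}$ refines the partial order $(\beta_1,m_1) < (\beta_2,m_2) \Leftrightarrow |\beta_1|<|\beta_2|$, so the minimum element is the index $I = 0$ corresponding to $\beta = 0$, $m = 0$. By Proposition \ref{pro4.wsw}(i) and Proposition \ref{pro4.zip}, the associated critical set is $C^\sigma_0 = M^\sigma_0$, and by Proposition \ref{pro4.wsw}(ii) the stratum $S^\sigma_0$ is $G^\phi$-equivariantly deformation retracts onto it. In particular $S^\sigma_{\leq 0} = S^\sigma_0$ and the restriction $H^*_{G^\phi}(S^\sigma_0;\Z_2) \to H^*_{G^\phi}(M^\sigma_0;\Z_2)$ is an isomorphism.

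Composing the surjections $r_I$ in order (equivalently, inducting downward through the filtration) then yields a surjection
\begin{equation*}
H^*_{G^\phi}(M^\sigma;\Z_2) \twoheadrightarrow H^*_{G^\phi}(S^\sigma_0;\Z_2) \cong H^*_{G^\phi}(M^\sigma_0;\Z_2),
\end{equation*}
and since every arrow involved is induced by an inclusion, the composition is precisely $\kappa_{\R}$. The only subtlety is the case of infinite $\mathcal{I}$: in each fixed degree $n$ one has an inverse system $\{H^n_{G^\phi}(S^\sigma_{\leq I};\Z_2)\}$ with surjective transition maps, so the associated Milnor $\varprojlim^1$ vanishes and $H^n_{G^\phi}(M^\sigma;\Z_2)$ surjects onto $H^n_{G^\phi}(S^\sigma_0;\Z_2)$. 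Properness of $f$ makes this tower well-behaved (the critical levels $|\beta_I|^2$ form a discrete unbounded sequence), so this is the main but quite mild obstacle.
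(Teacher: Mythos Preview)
Your argument is correct and is exactly the standard extraction of Kirwan surjectivity from the short exact sequences established in the proof of Theorem~\ref{the7.wsw}; the paper states the corollary without proof, treating it as an immediate consequence of that theorem, so your approach matches the paper's implicit one. Note that the paper's K1 asserts the index set is finite, so your $\varprojlim^1$ discussion, while harmless, is more generality than the paper claims to need.
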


Rearranging (\ref{equ7.gal}) yields 
 
\begin{equation}\label{equ7.ilo}
\mathbf{P}_t^{G^{\phi}}\boldsymbol{(} M_0^\sigma\boldsymbol{)}=\mathbf{P}_t^{G^{\phi}}\boldsymbol{(}M^{\sigma}\boldsymbol{)} -\sum_{I \neq  0} t^{d_I} \mathbf{P}_t^{G^{\phi}_{I}}\boldsymbol{(} M_{I}^{\sigma}\boldsymbol{)}.
\end{equation} 

If $G$ acts freely on $M_0$ then $\mathbf{P}_t^{G^{\phi}}\boldsymbol{(} M_0^\sigma\boldsymbol{)} = \mathbf{P}_t\boldsymbol{(} M^{\sigma}/\!\!/G^{\phi}\boldsymbol{)}$.

\section{Real Equivariant Formality}\label{formalitysect}

A $G$-space $X$ is called \emph{equivariantly formal} with respect to a field $\mathbb{F}$ if the Serre spectral sequence of the fibration $ X \hookrightarrow X_{G} \rightarrow BG $ collapses at the $E_{2}$-term; i.e.
\begin{equation}\label{equ7.tic}
H^{*}_{G}(X;\mathbb{F})= H^{*}(BG;\mathbb{F})\otimes_{\mathbb{F}} H^{*}(X;\mathbb{F}).
\end{equation}
This is equivalent to the Poincar\'{e} series satisfying
\begin{equation}\label{equ7.wiz}
\mathbf{P}_t^G\boldsymbol{(}X;\mathbb{F}\boldsymbol{)}=\mathbf{P}_t\boldsymbol{(}BG;\mathbb{F}\boldsymbol{)}\mathbf{P}_t\boldsymbol{(}X;\mathbb{F}\boldsymbol{)}.
\end{equation}

Kirwan proved that a Hamiltonian system $(M,\omega, G, \mu)$ is equivariantly formal with respect to $ \mathbb{Q} $ if \underline{both} $G$ and $M$ are  compact and connected. For a real abelian Hamiltonian system, Biss-Guillemin-Holm (see \cite{BGH}, Theorem B) proved the following result.

\begin{proposition}[\textbf{Biss-Guillemin-Holm}]\label{pro7.wsw}
Let $ (M,\omega, T, \mu, \sigma, \phi) $ be a real Hamiltonian system in which $M$ is a compact connected manifold, $T$ is a torus and $ \phi(g)=g^{-1} $ for any $ g\in T $. If $M^{\sigma}$ is the real locus and $ T^{\phi} $ is the real subgroup, then
 \begin{equation}
 H^{*}_{T^{\phi}}(M^{\sigma};\mathbb{Z}_{2}) \cong H^{*}(BT^{\phi};\mathbb{Z}_{2}) \otimes_{\mathbb{Z}_{2}} H^{*}(M^{\sigma};\mathbb{Z}_{2})
 \end{equation}
 as graded modules over $ H^{*}(BT^{\phi};\mathbb{Z}_{2}) $.
 \end{proposition}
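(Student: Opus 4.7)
The plan is to deduce the module isomorphism from a Smith-theoretic comparison of Poincar\'e series, using real equivariant perfection (Theorem \ref{the7.wsw}), Duistermaat's degree-halving theorem, and Kirwan's rational $T$-equivariant formality as the key inputs. First I would verify that Theorem \ref{the7.wsw} applies: for $\phi(g)=g^{-1}$ we have $\lie{t}_{-} = \lie{t}$, so $T^{\phi}_{\beta} = T^{\phi} = E(n)$ is already a maximal elementary abelian $2$-subgroup of itself, making the quotient $T^{\phi}_{\beta}/D_{\beta}$ trivial and the free extension property vacuous; and $\mathcal{RH}$ is $2$-primitive because the $T$-weights on each negative normal bundle of the Morse stratification pair strictly positively with $\beta$, so one can find an element of $T^{\phi} \cap \exp(\R\beta)$ that acts as $-1$ on every weight space.

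Second, I would exploit the fact that for a $(\Z_{2})^{n}$-space $X$, equivariant formality over $\Z_{2}$ is equivalent to the total Smith equality $\mathbf{P}_{1}\boldsymbol{(}X;\Z_{2}\boldsymbol{)} = \mathbf{P}_{1}\boldsymbol{(}X^{(\Z_{2})^{n}};\Z_{2}\boldsymbol{)}$, so it suffices to establish this equality for $X=M^{\sigma}$. Duistermaat's theorem \cite{DUI} provides the degree-halving identity $\mathbf{P}_{t^{2}}\boldsymbol{(}M^{\sigma};\Z_{2}\boldsymbol{)} = \mathbf{P}_{t}\boldsymbol{(}M;\Q\boldsymbol{)}$, yielding $\mathbf{P}_{1}\boldsymbol{(}M^{\sigma};\Z_{2}\boldsymbol{)} = \mathbf{P}_{1}\boldsymbol{(}M;\Q\boldsymbol{)}$. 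Kirwan's $T$-equivariant formality of $M$ over $\Q$, combined with Atiyah-Bott torus localization, gives $\mathbf{P}_{1}\boldsymbol{(}M;\Q\boldsymbol{)} = \mathbf{P}_{1}\boldsymbol{(}M^{T};\Q\boldsymbol{)}$; and since $M^{T}$ is a disjoint union of compact symplectic submanifolds with torsion-free integral cohomology (a standard consequence of the Kirwan/Atiyah-Bott machinery applied to the subsystems), $\mathbf{P}_{1}\boldsymbol{(}M^{T};\Q\boldsymbol{)} = \mathbf{P}_{1}\boldsymbol{(}M^{T};\Z_{2}\boldsymbol{)}$.

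Third, I would identify $(M^{\sigma})^{T^{\phi}} = M^{T}$. The inclusion $M^{T} \subseteq (M^{\sigma})^{T^{\phi}}$ uses that the compatibility $\phi_{*} = -\mathrm{Id}$ forces $\mu \circ \sigma = \mu$, so $\sigma$ preserves each component of $M^{T}$ and, by a rigidity argument at each $T$-fixed component, acts as the identity. The reverse inclusion follows by local weight analysis: at a $T^{\phi}$-fixed point $p \in M^{\sigma}$, the $T$-representation on $T_{p}M$ decomposes into complex one-dimensional weight spaces, and the $T^{\phi}$-fixed subspace of $T_{p}M^{\sigma}$ equals the sum of $T$-weight spaces with trivial $T^{\phi}$-character, which by the $2$-primitivity verified in the first step corresponds precisely to the $T$-fixed directions. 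Combined with the Poincar\'e series computation of the second step, this yields the required Smith equality, hence equivariant formality; triviality of the local coefficient system on the Serre spectral sequence of $M^{\sigma} \hookrightarrow M^{\sigma}_{T^{\phi}} \to BT^{\phi}$ (since $T^{\phi}$ lies in the connected group $T$, which acts trivially on $H^{*}(M^{\sigma};\Z_{2})$) then forces collapse at $E_{2}$ and produces the claimed graded $H^{*}(BT^{\phi};\Z_{2})$-module isomorphism.

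The main obstacle is the identification $(M^{\sigma})^{T^{\phi}} = M^{T}$ at non-isolated components and in situations where $\mu|_{M^{T}}$ coincides on distinct components, which requires careful local analysis via the equivariant Darboux theorem to rule out extra $T^{\phi}$-fixed points in $M^{\sigma} \setminus M^{T}$ and to verify that $\sigma$ acts trivially on each $T$-fixed component rather than merely preserving it. A purely Morse-theoretic alternative, iterating Theorem \ref{the7.wsw} recursively on sub-systems, runs into the difficulty that individual critical level sets $\mu_{I}^{-1}(0)^{\sigma}$ need not be $T^{\phi}$-equivariantly formal when $T$ acts with small stabilizers, so the Smith-theoretic route is strongly preferable.
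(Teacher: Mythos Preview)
The paper does not give its own proof of this proposition; it is quoted from Biss--Guillemin--Holm \cite{BGH} and used as a black-box input to Theorem~\ref{the7.gay}. Your attempted independent proof has a genuine gap in the third step: the identification $(M^{\sigma})^{T^{\phi}} = M^{T}$ is false in both directions. For $M^{T} \subseteq M^{\sigma}$, consider $M = \mathbb{CP}^{1} \times \mathbb{CP}^{1}$ with the diagonal $\mathrm{U}(1)$-action and $\sigma([a:b],[c:d]) = ([\bar c:\bar d],[\bar a:\bar b])$; one checks this is a real Hamiltonian system with $\phi(g)=g^{-1}$, yet $\sigma$ swaps the $T$-fixed points $([1:0],[0:1])$ and $([0:1],[1:0])$, so they do not lie in $M^{\sigma}$. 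No rigidity argument can repair this: $\sigma$ genuinely permutes components of $M^{T}$ sharing a moment value. For the reverse inclusion, let $\mathrm{U}(1)$ act on $\mathbb{CP}^{2}$ with weights $(0,1,2)$ and take $\sigma$ to be complex conjugation; then $[1:0:1] \in (M^{\sigma})^{T^{\phi}} \setminus M^{T}$ because $T^{\phi}=\{\pm 1\}$ only detects weights modulo $2$.

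Several secondary claims also fail. Components of $M^{T}$ are arbitrary compact symplectic submanifolds and need not have torsion-free integral cohomology, so the step $\mathbf{P}_{1}\boldsymbol{(}M^{T};\mathbb{Q}\boldsymbol{)} = \mathbf{P}_{1}\boldsymbol{(}M^{T};\mathbb{Z}_{2}\boldsymbol{)}$ is unjustified. Your argument for trivial local coefficients is broken: $T$ does not act on $M^{\sigma}$ (only $T^{\phi}$ does), so connectedness of $T$ provides no homotopy of self-maps of $M^{\sigma}$ connecting the action of $g \in T^{\phi}$ to the identity. Finally, the $2$-primitivity check in your first paragraph is not valid in general: if the negative normal bundle carries $T$-weights $\alpha$ and $2\alpha$, no single element of $T^{\phi}$ acts by $-1$ on both weight spaces, so the hypothesis of Theorem~\ref{the7.wsw} can fail --- and in any case that theorem concerns $|\mu|^{2}$, whose stratification is not the one your Smith-theoretic comparison would need.
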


We use this result to prove a real version of equivariant formality for nonabelian real Hamiltonian systems. First we have the following definition.

\begin{definition}\label{def7.gay}
We say that a compact connected involutive Lie group $ (G,\phi) $ has the \textbf{toral free extension property}\index{toral free extension property} if the following hold.
\begin{enumerate}
\item  The pair $ (G,\phi) $ has the free extension property.
\item There exists a maximal torus $T\leq G$ such that $ \phi(g)=g^{-1} $ for all $ g\in T $ and $ T^{\phi} $ is a maximal elementary abelian 2-subgroup of $ G^{\phi} $.
\end{enumerate}
\end{definition}

Both $(\mathrm{U}(n), \phi)$ and $(\mathrm{SU}(n),\phi)$ where $\phi(A) = \overline{A}$ satisfy the toral free extension party. However $(\mathrm{U}(n) \times \mathrm{U}(n), \phi)$ with $\phi(A,B) = (\overline{B}, \overline{A})$ does not.

\begin{theorem}[\textbf{Real Equivariant Formality}]\index{Real Equivariant Formality Theorem}\label{the7.gay}
Let $ (M,\omega, G,\mu,\sigma,\phi) $ be a   real Hamiltonian system where both $G $ and $M$ are  compact and connected. If $(G,\phi) $ has the toral free extension property, then the real locus $M^{\sigma}$ is $G^{\phi}$-equivariantly formal with respect to the field $ \mathbb{Z}_{2} $. 
\end{theorem}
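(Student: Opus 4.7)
The plan is to reduce $G^{\phi}$-equivariant formality of $M^{\sigma}$ to the abelian case handled by Biss--Guillemin--Holm (Proposition \ref{pro7.wsw}), using the free extension property of $(G,\phi)$ as a bridge. Set $D := T^{\phi}$, which by the toral free extension hypothesis is a maximal elementary abelian $2$-subgroup of $G^{\phi}$. First I would restrict the action from $G$ to $T$, forming the Hamiltonian system $(M,\omega,T,\mu_T)$ with moment map $\mu_T := \pi_T \circ \mu$, where $\pi_T \colon \lie{g}^* \to \lie{t}^*$ is the orthogonal projection determined by the invariant inner product. Because $\phi(g) = g^{-1}$ on $T$, the subalgebra $\lie{t}$ lies inside the $-1$-eigenspace $\lie{g}_{-}$, so $\pi_T$ annihilates $\lie{g}_{+}$ and intertwines $\phi_*$ with $(\phi|_T)_*$; consequently the identity $\mu \circ \sigma = -\phi_* \circ \mu$ descends to $\mu_T \circ \sigma = -(\phi|_T)_* \circ \mu_T$, so $(M,\omega,T,\mu_T,\sigma,\phi|_T)$ is a real Hamiltonian system of exactly the form needed in Proposition \ref{pro7.wsw}. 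Applying that proposition yields
\begin{equation*}
\mathbf{P}_t^{D}(M^{\sigma};\Z_2) = \mathbf{P}_t(BD;\Z_2)\cdot \mathbf{P}_t(M^{\sigma};\Z_2).
\end{equation*}

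Next I would use the free extension property of $(G,\phi)$ at $\beta = 0$, which provides surjectivity of $H^{*}(BD;\Z_2) \to H^{*}(G^{\phi}/D;\Z_2)$. Invoking Proposition \ref{pro5.orb} for the inclusion $D \hookrightarrow G^{\phi}$, first with the $G^{\phi}$-space $M^{\sigma}$ and then with a point, produces two Leray--Hirsch splittings whose Poincar\'e-series consequences are
\begin{equation*}
\mathbf{P}_t^{D}(M^{\sigma};\Z_2) = \mathbf{P}_t(G^{\phi}/D;\Z_2)\cdot \mathbf{P}_t^{G^{\phi}}(M^{\sigma};\Z_2),
\end{equation*}
and
\begin{equation*}
\mathbf{P}_t(BD;\Z_2) = \mathbf{P}_t(G^{\phi}/D;\Z_2)\cdot \mathbf{P}_t(BG^{\phi};\Z_2).
\end{equation*}
Combining these with the previous identity and cancelling the common factor $\mathbf{P}_t(G^{\phi}/D;\Z_2)$, which is a polynomial with constant term $1$ and hence not a zero divisor in $\Z_2[[t]]$, delivers
\begin{equation*}
\mathbf{P}_t^{G^{\phi}}(M^{\sigma};\Z_2) = \mathbf{P}_t(BG^{\phi};\Z_2)\cdot \mathbf{P}_t(M^{\sigma};\Z_2).
\end{equation*}

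Finally I would upgrade this numerical identity to the module isomorphism claimed by equivariant formality. The Serre spectral sequence of $M^{\sigma} \hookrightarrow M^{\sigma}_{G^{\phi}} \to BG^{\phi}$ satisfies the dimensional inequalities $\mathbf{P}_t^{G^{\phi}}(M^{\sigma}) \leq \mathbf{P}_t(E_2) \leq \mathbf{P}_t(BG^{\phi})\mathbf{P}_t(M^{\sigma})$, where the first inequality is equality precisely when all differentials vanish and the second is equality precisely when the local coefficient system is trivial; equality of the outer terms therefore forces both the collapse at $E_2$ and the trivialization of monodromy, which together give $H^{*}_{G^{\phi}}(M^{\sigma};\Z_2) \cong H^{*}(BG^{\phi};\Z_2)\otimes H^{*}(M^{\sigma};\Z_2)$ as graded modules. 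The only delicate step is the first one, verifying that the restricted torus system genuinely satisfies the hypotheses of Proposition \ref{pro7.wsw}; once that is in hand, the remainder is routine algebraic assembly of results already established in the paper.
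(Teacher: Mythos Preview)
Your proposal is correct and follows essentially the same route as the paper: restrict to the maximal torus $T$ to obtain a real abelian Hamiltonian system, apply Biss--Guillemin--Holm to get $T^{\phi}$-equivariant formality of $M^{\sigma}$, then use the free extension property and two applications of Leray--Hirsch (your invocations of Proposition~\ref{pro5.orb}) to pass from $T^{\phi}$- to $G^{\phi}$-equivariant Poincar\'e series. Your write-up is in fact slightly more careful than the paper's in two places: you explicitly verify the compatibility condition $\mu_T \circ \sigma = -(\phi|_T)_* \circ \mu_T$ via the observation $\lie{t} \subseteq \lie{g}_-$, and you justify in the final paragraph why the Poincar\'e series identity forces collapse of the Serre spectral sequence (relevant since $G^{\phi}$ need not be connected, so trivial monodromy is not automatic); the paper simply asserts the equivalence of the Poincar\'e series identity with formality.
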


\begin{proof}
Since $(G,\phi) $ has the toral free extension property, there exists a maximal torus $T\leq G$ such that $ \phi(g)=g^{-1} $ for any $ g\in T $.  Let  $ \mu_{T}: M \rightarrow \mathfrak{t}^{*}$ be the composition of $ \mu $ with the orthogonal projection $\lie{g} \rightarrow \lie{t}^{*} $. Then $ (M,\omega, T,\mu_{T},\sigma,\phi_{T}) $ is a real Hamiltonian system such that $ \phi_{T}: T \rightarrow T $ is the inversion map. By the Biss-Guillemin-Holm theorem, $M^{\sigma}$ is $T^{\phi}$-equivariantly formal yielding 
\begin{equation}\label{equ7.vol}
\mathbf{P}_t\boldsymbol{(} M^{\sigma}_{T^{\phi}}\boldsymbol{)} = \mathbf{P}_t\boldsymbol{(}M^\sigma\boldsymbol{)} \mathbf{P}_t\boldsymbol{(}BT^\phi\boldsymbol{)}.
\end{equation}

Consider the following  commutative diagram:
\begin{equation*} 
\xymatrix{     &     & M^{\sigma} \ar[rd]^{i_{2}} \ar[ld]_{i_{1}}  & \\
   & M^{\sigma}_{T^{\phi}}\ar[dd]\ar[rr]   &      &  M^{\sigma}_{G^{\phi}} \ar[dd]  &  \\
    G^{\phi}/T^{\phi}\ar[ru]^{j_{1}}\ar[rd]_{j_{2}}   &  &      &   &  \\ 
    & BT^{\phi} \ar[rr] &      &  BG^{\phi} &  } 
\end{equation*}

Since  $ (G,\phi) $ has the free extension property and $T^{\phi}$ is a maximal elementary abelian 2-subgroup of $G^{\phi}$, the induced maps $j_1^*$ and $j_2^*$ are surjective. By the Leray-Hirsch Theorem we deduce
\begin{eqnarray*}\label{equ7.diy}
\mathbf{P}_t(BT^{\phi})  &=& \mathbf{P}_t(BG^{\phi}) \mathbf{P}_t\boldsymbol{(}G^{\phi}/T^{\phi}\boldsymbol{)} \text{, and}\\
\mathbf{P}_t\boldsymbol{(}M^{\sigma}_{T^{\phi}}\boldsymbol{)} & =& \mathbf{P}_t\boldsymbol{(}M^{\sigma}_{G^\phi}\boldsymbol{)} \mathbf{P}_t\boldsymbol{(}G^{\phi}/T^{\phi}\boldsymbol{)}.
\end{eqnarray*}
Combining these with (\ref{equ7.vol}), we get
\begin{equation*}
\mathbf{P}_t\boldsymbol{(}M^{\sigma}_{G^{\phi}}\boldsymbol{)}\mathbf{P}_t\boldsymbol{(}G^{\phi}/T^{\phi}\boldsymbol{)}=\mathbf{P}_t\boldsymbol{(}M^{\sigma}\boldsymbol{)} \mathbf{P}_t\boldsymbol{(}BG^{\phi}\boldsymbol{)}\mathbf{P}_t\boldsymbol{(}G^{\phi}/T^{\phi}\boldsymbol{)},
\end{equation*}
which  implies that $\mathbf{P}_t\boldsymbol{(}M^{\sigma}_{G^{\phi}}\boldsymbol{)} =\mathbf{P}_t\boldsymbol{(}M^{\sigma}\boldsymbol{)}\mathbf{P}_t\boldsymbol{(}BG^{\phi}\boldsymbol{)} $ completing the proof.
\end{proof}

\section{Examples}\label{examplessect}

\begin{example}\label{ConstructTheGrass}

Let $M = \mathrm{Mat}_{n \times k}(\C) \cong \C^{nk}$ be the set of $n \times k$-matrices with the standard symplectic form (see Example \ref{exa3.orb}). The action of $\mathrm{U}(k)$ by right multiplication is Hamiltonian with moment map that sends a $n \times k$ matrix $A$ to $\mu(A) = \frac{\sqrt{-1}}{2} (\overline{A}^t A - Id_k)$. Observe that $\mu^{-1}(0)$ equals the set $n \times k$-matrices whose columns form an orthonormal frame in $\C^n$, therefore $\mathrm{U}(k)$ acts freely on $\mu^{-1}(0)$ and 
$$ \mu^{-1}(0)/\mathrm{U}(k) = \mathrm{Gr}_k(\C^n). $$
This $\mathrm{U}(k)$-action extends to a $\mathrm{GL}_k(\C)$-action by right multiplication and $\mu^{-1}(0)/\mathrm{U}(k)$ corresponds to a GIT quotient by the Kempf-Ness Theorem. This implies that minimum Morse stratum $S_0$ is equal to the union of $\mathrm{GL}_k(\C)$-orbits intersecting $\mu^{-1}(0)$ for which it follows that if $A$ has full rank $k$ then it is a critical point of $f$ if and only if it lies in $\mu^{-1}(0)$.

Suppose that $A$ is critical point of $f$ and has rank $r < k$. Then after acting by some element of $\mathrm{U}(k)$ if necessary, we can assume that the first $r$ columns of $A$ are linearly independent and the remaining columns are zero. Effectively then, we can regard $A$ as an element of $\mathrm{Hom}( \C^r, \C^n)$ and deduce that $A$ is a critical point for $f$ if and only if the first $r$ columns of $A$ are orthonormal. The corresponding critical value is the adjoint orbit containing the diagonal matrix $ \beta$ with $r$ entries equal to $0$ and the rest equal to $\sqrt{-1}/2$. The corresponding Morse stratum is the set of all matrices of rank $r$. Therefore the Morse stratification agrees with the decomposition $M = S_0 \cup S_1 \cup \cdots \cup S_k$ where $S_i$ is the set of matrices of rank $k-i$. It is straightforward to calculate that the dimension of $S_i$ is $ 2(k-i)i + 2(k-i)(n)= 2(k-i)(n+i)$ and therefore has codimension $2kn - 2(k-i)(n+i) = 2i(n-k+i)$. The Hamiltonian subsystem corresponding to $S_i$ is isomorphic to $(\mathrm{Mat}_{n \times (k-i)}, \omega, \mathrm{U}(k-i) \times \mathrm{U}(i), \mu_i)$ which looks like a lower rank version of $M$ times an extra trivial action by $\mathrm{U}(i)$. Applying perfection yields the formula
$$   \mathbf{P}_t^{\mathrm{U}(k)}\boldsymbol{(}\mu^{-1}(0);\Q\boldsymbol{)} =   \mathbf{P}_t\boldsymbol{(}B\mathrm{U}(n);\Q\boldsymbol{)} - \sum_{i=1}^k t^{2i(n-k+i)}  \mathbf{P}_t^{\mathrm{U}(k-i)}\boldsymbol{(}\mu_i^{-1}(0);\Q\boldsymbol{)} \mathbf{P}_t\boldsymbol{(}B\mathrm{U}(i);\Q\boldsymbol{)}. $$
Inputting known values determines recursion relation for the Poincar\'e polynomials of the complex Grassmannians.
$$  \mathbf{P}_t\boldsymbol{(}\mathrm{Gr}_k(\C^n);\Q\boldsymbol{)} = \prod_{p=1}^k \frac{1}{1-t^{2p}} - \sum_{i=1}^k   \mathbf{P}_t\boldsymbol{(}\mathrm{Gr}_{k-i}(\C^n);\Q\boldsymbol{)}   \prod_{p=1}^i \frac{t^{2(n-k+i)} }{1-t^{2p}} . $$
This system admits a real structure where $\sigma$ and $\phi$ are the standard conjugations. We get $M^\sigma = \mathrm{Mat}_{n \times k} (\R)$ and $\mathrm{U}(k)^{\phi} = \mathrm{O}(k)$. Running through the details produces an analogous recursive formula
$$   \mathbf{P}_t\boldsymbol{(}\mathrm{Gr}_k(\R^n);\Z_2\boldsymbol{)} = \prod_{p=1}^k \frac{1}{1-t^{p}} - \sum_{i=1}^k  \mathbf{P}_t\boldsymbol{(}\mathrm{Gr}_{k-i}(\R^n);\Z_2\boldsymbol{)} \prod_{p=1}^i \frac{t^{(n-k+i)} }{1-t^{p}}.  $$
The solution of this recursion is the well known formula $ \mathbf{P}_t\boldsymbol{(}\mathrm{Gr}_k(\R^n)\boldsymbol{)} = \frac{\prod_{p=n-k+1}^n (1-t^p)}{\prod_{p=1}^k(1-t^p)}$. Observe that even without solving the recursion, we can deduce immediately that 
\begin{equation}\label{chekst}
 \mathbf{P}_t\boldsymbol{(}\mathrm{Gr}_k(\R^n);\Z_2\boldsymbol{)}  = \mathbf{P}_{t^{1/2}}\boldsymbol{(}\mathrm{Gr}_k(\C^n);\Q\boldsymbol{)}.
 \end{equation}

\end{example}

\begin{example}\label{prodgrassex}

Kirwan considers the diagonal action of $\mathrm{U}(n)$ on a product of Grassmannians $M=\mathrm{Gr}_{l_{1}}(\mathbb{C}^{n})\times \cdots\times \mathrm{Gr}_{l_{r}}(\mathbb{C}^{n})$. This action is Hamiltonian with moment map 
\begin{equation}\label{equ3.dow}
\mu(V_{1},\dots,V_{r})=\sqrt{-1}\Big(\sum_{j=1}^{r} \mathrm{Pr}_{V_{j}} - \Big(\dfrac{\sum_{j=1}^{r}l_{j}}{n}\Big)\mathrm{Id}_{n}\Big),
\end{equation}
where $\mathrm{Pr}_{V_{j}}$ is the orthogonal projection on $V_j$. The scalar matrices $\mathrm{U}(1) \leq \mathrm{U}(n)$ act trivially on $M$ determining a Hamiltonian action by $\mathrm{PU}(n) = \mathrm{U}(n)/\mathrm{U}(1)$ on $M$.  If $gcd(n,l_1+\dots+l_r) = 1$, then $\mathrm{PU}(n)$ acts freely on $M_0$ so $M/\!\!/\mathrm{U}(n) = M/\!\!/\mathrm{PU}(n) $ is a manifold and 
\begin{equation}\label{UtoPU}
 \mathbf{P}_t^{\mathrm{U}(n)}\boldsymbol{(}M_0;\mathbb{Q}\boldsymbol{)} = \mathbf{P}_t\boldsymbol{(}B\mathrm{U}(1);\mathbb{Q}\boldsymbol{)} \mathbf{P}_t^{\mathrm{PU}(n)}\boldsymbol{(}M_0;\mathbb{Q}\boldsymbol{)} =  \frac{1}{1-t^2} \mathbf{P}_t\boldsymbol{(}M/\!\!/\mathrm{U}(n);\mathbb{Q}\boldsymbol{)}.
 \end{equation}

We have a real pair $(\phi,\sigma)$ where $\phi$ and $\sigma$ are standard complex conjugations. Therefore $\mathrm{U}(n)^\phi = \mathrm{O}(n)$ and 
\begin{equation}\label{forrefgrr}
M^\sigma  =\mathrm{Gr}_{l_{1}}(\mathbb{R}^{n})\times \cdots\times \mathrm{Gr}_{l_{r}}(\mathbb{R}^{n}).
\end{equation}  
 If $gcd(n,l_1+\dots+l_r) = 1$, then $M/\!\!/\mathrm{O}(n) = M/\!\!/\mathrm{PO}(n) $ is a Lagrangian submanifold of $M/\!\!/\mathrm{U}(n) = M/\!\!/\mathrm{PU}(n)$.

\begin{proposition}
We have equality
$$  \mathbf{P}_t^{\mathrm{O}(n)}\boldsymbol{(}M_0^{\sigma} ;\Z_2\boldsymbol{)} = \mathbf{P}_{t^{1/2}}^{\mathrm{U}(n)}\boldsymbol{(}M_0;\mathbb{Q}\boldsymbol{)}. $$
\end{proposition}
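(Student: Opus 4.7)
My plan is to establish the identity by comparing the equivariant perfection formulas for $(M,\mathrm{U}(n))$ over $\Q$ (Kirwan) and for $(M^{\sigma},\mathrm{O}(n))$ over $\Z_{2}$ (Theorem~\ref{the7.wsw}), and running strong induction on $\dim_{\R}M$. Writing out the two recursions,
\begin{equation*}
\mathbf{P}_t^{\mathrm{U}(n)}\boldsymbol{(}M_0;\Q\boldsymbol{)} = \mathbf{P}_t^{\mathrm{U}(n)}\boldsymbol{(}M;\Q\boldsymbol{)} - \sum_{I\neq 0} t^{2d_I}\,\mathbf{P}_t^{\mathrm{U}(n)_I}\boldsymbol{(}M_I;\Q\boldsymbol{)},
\end{equation*}
\begin{equation*}
\mathbf{P}_t^{\mathrm{O}(n)}\boldsymbol{(}M_0^{\sigma};\Z_2\boldsymbol{)} = \mathbf{P}_t^{\mathrm{O}(n)}\boldsymbol{(}M^{\sigma};\Z_2\boldsymbol{)} - \sum_{J\neq 0} t^{d_J}\,\mathbf{P}_t^{\mathrm{O}(n)_J}\boldsymbol{(}M_J^{\sigma};\Z_2\boldsymbol{)},
\end{equation*}
the decisive observation is that Proposition~\ref{pro4.wsw}(iv) makes the real codimension $d_J$ exactly half the corresponding complex codimension $2d_I$, so that $t^{2d_I}$ is sent to $t^{d_I}$ under $t\mapsto t^{1/2}$. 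This is the mechanism that makes the whole comparison work.

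For the base terms ($I = J = 0$), I apply Kirwan's equivariant formality over $\Q$ and Real Equivariant Formality (Theorem~\ref{the7.gay}) over $\Z_{2}$, the latter being available because $(\mathrm{U}(n),\phi)$ satisfies the toral free extension property. This reduces the base equality $\mathbf{P}_t^{\mathrm{O}(n)}\boldsymbol{(}M^{\sigma};\Z_2\boldsymbol{)} = \mathbf{P}_{t^{1/2}}^{\mathrm{U}(n)}\boldsymbol{(}M;\Q\boldsymbol{)}$ to verifying $\mathbf{P}_t\boldsymbol{(}M^{\sigma};\Z_2\boldsymbol{)} = \mathbf{P}_{t^{1/2}}\boldsymbol{(}M;\Q\boldsymbol{)}$ and $\mathbf{P}_t\boldsymbol{(}B\mathrm{O}(n);\Z_2\boldsymbol{)} = \mathbf{P}_{t^{1/2}}\boldsymbol{(}B\mathrm{U}(n);\Q\boldsymbol{)}$. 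The first is immediate from (\ref{forrefgrr}), the Künneth formula, and identity (\ref{chekst}) applied factor-by-factor. The second is direct from the presentations $H^{*}(B\mathrm{U}(n);\Q) = \Q[c_1,\ldots,c_n]$ with $\deg c_i = 2i$ and $H^{*}(B\mathrm{O}(n);\Z_2) = \Z_2[w_1,\ldots,w_n]$ with $\deg w_i = i$, both giving $\prod_{i=1}^{n}(1-t^i)^{-1}$ after the substitution.

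For the sum over nonminimal strata, Kirwan's description of $\mathcal{H}_I$ for the diagonal action of $\mathrm{U}(n)$ on a product of complex Grassmannians shows each subsystem is again a diagonal action of a product $\prod_{j}\mathrm{U}(n_j)$ on a product of smaller complex Grassmannians, of strictly smaller total complex dimension; the real pair $(\sigma,\phi)$ restricts to give a real Hamiltonian system of exactly the same product type (with $\mathrm{O}(n_j)$-factors and the real Grassmannians obtained by conjugation). By the strong inductive hypothesis, $\mathbf{P}_t^{\mathrm{O}(n)_J}\boldsymbol{(}M_J^{\sigma};\Z_2\boldsymbol{)} = \mathbf{P}_{t^{1/2}}^{\mathrm{U}(n)_I}\boldsymbol{(}M_I;\Q\boldsymbol{)}$ for each corresponding index, and combining with the codimension-halving the two recursions agree under $t \mapsto t^{1/2}$. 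The hardest step will be establishing the bijective matching of complex and real stratum indices so that these sums correspond term-by-term: one needs to verify that for this specific moment map each $\mathrm{U}(n)$-orbit $\mathcal{O}_\beta$ meeting $\lie{g}_{-}$ does so in a single $\mathrm{O}(n)$-orbit, which reduces the $k$ in Proposition~\ref{pro4.zip} to $1$. Since $\phi(A) = \overline{A}$ gives $\lie{u}(n)_{-} = \sqrt{-1}\,\mathrm{Sym}_n(\R)$ and the critical values of $|\mu|^2$ come from eigenvalue multiplicities of such operators, the real Spectral Theorem supplies this uniqueness; verifying this carefully, along with the $2$-primitivity needed to invoke Theorem~\ref{the7.wsw}, is where the main technical work lies.
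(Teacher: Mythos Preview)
Your proposal is correct and follows essentially the same route as the paper: set up the two equivariant-perfection recursions, use equivariant formality on both sides to split the leading term as $\mathbf{P}_t(M^{\sigma})\mathbf{P}_t(B\mathrm{O}(n))$ versus $\mathbf{P}_t(M)\mathbf{P}_t(B\mathrm{U}(n))$, match the nonminimal strata term-by-term (single $\mathrm{O}(n)$-orbit per $\mathcal{O}_\beta\cap\lie{g}_-$ via the real Spectral Theorem, codimension halving from Proposition~\ref{pro4.wsw}(iv), $2$-primitivity inherited from Kirwan's $\Z$-primitivity), and finish by induction. The only cosmetic difference is that the paper inducts on the rank $n$ while you induct on $\dim_{\R}M$; both work since each $Z_{\beta,l}$ is a product of Grassmannians in strictly smaller $\C^{m_i}$.
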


\begin{proof}

Following the general scheme, there exist a collection of  Hamiltonian subsystems   $(Z_I, \omega_I, \mathrm{U}(n)_I, \mu_I)$  such that 
\begin{equation}
\mathbf{P}_t^{\mathrm{U}(n)}\boldsymbol{(}M_0);\mathbb{Q}\boldsymbol{)}=\mathbf{P}_t(M;\mathbb{Q})\mathbf{P}_t\boldsymbol{(}B\mathrm{U}(n);\mathbb{Q}\boldsymbol{)}-\sum_{I \neq 0} t^{2d_I}\mathbf{P}_t^{\mathrm{U}(n)_{I}}\boldsymbol{(}M_{I};\mathbb{Q}\boldsymbol{)},
\end{equation}
where $M_I := \mu^{-1}(0)$. It is more convenient to refine the index set described in \S \ref{MSfRH} to one whose strata are connected.  

Identify the Lie algebra of the maximal torus $\lie{t} \cong \R^n$ in the standard way, with positive Weyl chamber $\lie{t}_+ = \{(x_1,\dots,x_n) | x_1 \geq \dots \geq x_n \geq 0\}$.  The index set $\mathcal{I}$ is a finite subset of $\lie{t}_+ \times \mathrm{Mat}_{s,r}(\mathbb{Z})$ of pairs $(\beta, l)$ for which $$\beta = \Big( \underbrace{\frac{k_{1}}{m_{1}},\dots, \frac{k_{1}}{m_{1}}}_{m_{1}},\dots,\underbrace{\frac{k_{s}}{m_{s}},\dots, \frac{k_{s}}{m_{s}}}_{m_{s}}\Big )$$ where $m_i, k_i \in \Z$, $m_i >0$, $k_i \geq 0$, $m_1+\dots+m_s = n$, and $k_1+\dots+k_s = \sum_{j=1}^{r}l_{j}$, $ \frac{k_{1}}{m_{1}}> \cdots >\frac{k_{s}}{m_{s}}$  and $l$ is a matrix of non-negative integers $ l=(l_{i,j})\in \mathrm{Mat}_{s,r}(\mathbb{Z})$ such that  $\sum_{i=1}^{s}l_{ij}=l_{j}$ and  $\sum_{j=1}^{r}l_{ij}=k_{i}$. Then
$$Z_{\beta,l}  \cong \prod_{i=1}^s \prod_{j=1}^r \mathrm{Gr}_{l_{i,j}}(\C^{m_i})$$ 
and the Hamiltonian subsystem $(Z_{\beta,l}, \omega_{\beta,l}, \mathrm{U}(n)_{\beta}, \mu_{\beta,l})$ is a product of Hamiltonian systems of the type (\ref{equ3.dow}) but with rank $m_i$ less than $n$. Consequently
\begin{equation}\label{recrU}
\mathbf{P}_t^{\mathrm{U}(n)}\boldsymbol{(}M_0;\mathbb{Q}\boldsymbol{)}=\mathbf{P}_t\boldsymbol{(}M;\mathbb{Q}\boldsymbol{)}\mathbf{P}_t\boldsymbol{(}B\mathrm{U}(n);\mathbb{Q}\boldsymbol{)}-\sum_{(\beta,l) \neq 0} t^{2d_{\beta,l}} \prod_{i=1}^s \mathbf{P}^{\mathrm{U}(m_i)}_t\boldsymbol{(} M_{\beta,l};\Q\boldsymbol{)}
\end{equation}
which can be calculated recursively in the rank $n$. The base of the recursion occurs with $n=1$ in which case $\mathrm{Gr}_0(\C^1) = \mathrm{Gr}_1(\C^1)$ is a point and $\mathbf{P}_t^{\mathrm{U}(1)}(point) = \mathbf{P}_t(B\mathrm{U}(1)) = (1-t^2)^{-1}$. 

The calculation for the real quotient proceeds analogously.  The real Hamiltonian subsystems satisfy
$$Z_{\beta,l}^{\sigma}  \cong \prod_{i=1}^s \prod_{j=1}^r \mathrm{Gr}_{l_{i,j}}(\R^{m_i})$$ 
and are products of systems of the form (\ref{forrefgrr}) but of rank $m_i < n$. The action of $\mathrm{U}(n)_{\beta}$ on the tangent spaces $T_x M_{\beta,l}$ are $\Z$-primitive as observed by Kirwan (Remark 16.11 \cite{KIR}), so the system is 2-primitive. Applying Theorem \ref{the7.wsw} we obtain the recursive formula
\begin{equation}\label{recrO}
\mathbf{P}_t^{\mathrm{O}(n)}\boldsymbol{(}M_0^{\sigma} ;\Z_2\boldsymbol{)}=\mathbf{P}_t\boldsymbol{(}M^{\sigma} ;\Z_2\boldsymbol{)}\mathbf{P}_t\boldsymbol{(}B\mathrm{O}(n);\Z_2\boldsymbol{)}-\sum_{(\beta,l) \neq 0} t^{d_{\beta,l}} \prod_{i=1}^s \mathbf{P}_t^{\mathrm{O}(m_i)}\boldsymbol{(} M_{\beta,l}^{\sigma};\Z_2 \boldsymbol{)}.
\end{equation}
The inputs to the recursions (\ref{recrU}) and (\ref{recrO}) satisfy 
\begin{eqnarray*}
\mathbf{P}_t\textbf{(}M^{\sigma} ;\Z_2\textbf{)}  &=& \mathbf{P}_{t^{1/2}}\textbf{(} M; \mathbb{Q}\textbf{)}, \\
\mathbf{P}_t\textbf{(}B\mathrm{O}(n);\Z_2\textbf{)} &=& \mathbf{P}_{t^{1/2}}\textbf{(}B\mathrm{U}(n);\Q\textbf{)},\\
   \mathbf{P}_t\textbf{(}B\mathrm{O}(1);\Z_2\textbf{)} &= &  \mathbf{P}_{t^{1/2}}\textbf{(}B\mathrm{U}(1); \mathbb{Q}\textbf{)}.
\end{eqnarray*}
We conclude that 
$$ \mathbf{P}_t^{\mathrm{O}(n)}\boldsymbol{(}M_0^{\sigma} ;\Z_2\boldsymbol{)} = \mathbf{P}^{\mathrm{U}(n)}_{t^{1/2}}\boldsymbol{(}M_0;\mathbb{Q}\boldsymbol{)}. $$
\end{proof}

\begin{corollary}
If $gcd(n,l_1+\dots+l_r) = 1$ and $n$ is odd, then 
\begin{equation}\label{halfbetti}
\mathbf{P}_{t}\boldsymbol{(}M^{\sigma}/\!\!/\mathrm{O}(n);\Z_2\boldsymbol{)} = \mathbf{P}_{t^{1/2}}\boldsymbol{(}M/\!\!/\mathrm{U}(n);\mathbb{Q}\boldsymbol{)}.
\end{equation}
\end{corollary}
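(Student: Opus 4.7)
My plan for proving (\ref{halfbetti}) is to combine the previous proposition with a comparison of the Borel constructions for the $\mathrm{O}(n)$- and $\mathrm{PO}(n)$-actions on $M_0^\sigma$, exploiting the splitting that is available when $n$ is odd.

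First I would establish that $\mathrm{PO}(n)$ acts freely on $M_0^\sigma$. If $g\in\mathrm{O}(n)$ stabilises $x\in M_0^\sigma\subseteq M_0$, then $g$ stabilises $x$ as an element of $\mathrm{U}(n)$. Freeness of the $\mathrm{PU}(n)$-action on $M_0$ (which uses $\gcd(n,l_1+\dots+l_r)=1$ and is asserted in the paragraph preceding the corollary) forces $g$ to be central in $\mathrm{U}(n)$, hence $g\in \mathrm{U}(1)\cap \mathrm{O}(n)=\{\pm I\}$. Therefore the stabiliser of $x$ in $\mathrm{PO}(n)$ is trivial, so $M^\sigma/\!\!/\mathrm{O}(n)=M_0^\sigma/\mathrm{PO}(n)$ is a manifold and $H^*_{\mathrm{PO}(n)}(M_0^\sigma;\Z_2)=H^*(M^\sigma/\!\!/\mathrm{O}(n);\Z_2)$.

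Next I would use the hypothesis that $n$ is odd. In that case $-I\notin \mathrm{SO}(n)$, so the determinant map splits the short exact sequence $1\to \mathrm{SO}(n)\to \mathrm{O}(n)\to \{\pm 1\}\to 1$, giving a group isomorphism $\mathrm{O}(n)\cong \mathrm{SO}(n)\times\{\pm I\}$ under which the projection $\mathrm{O}(n)\to\mathrm{PO}(n)$ restricts to an isomorphism $\mathrm{SO}(n)\xrightarrow{\cong}\mathrm{PO}(n)$. Because $\{\pm I\}$ acts trivially on $M_0^\sigma$, this splitting yields a natural homotopy equivalence
\begin{equation*}
(M_0^\sigma)_{\mathrm{O}(n)}\;\simeq\;(M_0^\sigma)_{\mathrm{PO}(n)}\times B\mathrm{O}(1),
\end{equation*}
and combining with the previous step and Künneth gives
\begin{equation*}
\mathbf{P}_t^{\mathrm{O}(n)}\boldsymbol{(}M_0^\sigma;\Z_2\boldsymbol{)}
=\mathbf{P}_t\boldsymbol{(}M^\sigma/\!\!/\mathrm{O}(n);\Z_2\boldsymbol{)}\cdot\mathbf{P}_t\boldsymbol{(}B\mathrm{O}(1);\Z_2\boldsymbol{)}
=\tfrac{1}{1-t}\,\mathbf{P}_t\boldsymbol{(}M^\sigma/\!\!/\mathrm{O}(n);\Z_2\boldsymbol{)}.
\end{equation*}

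On the complex side, equation (\ref{UtoPU}) evaluated at $t^{1/2}$ yields
\begin{equation*}
\mathbf{P}^{\mathrm{U}(n)}_{t^{1/2}}\boldsymbol{(}M_0;\Q\boldsymbol{)}
=\tfrac{1}{1-t}\,\mathbf{P}_{t^{1/2}}\boldsymbol{(}M/\!\!/\mathrm{U}(n);\Q\boldsymbol{)}.
\end{equation*}
The preceding proposition gives $\mathbf{P}_t^{\mathrm{O}(n)}\boldsymbol{(}M_0^\sigma;\Z_2\boldsymbol{)}=\mathbf{P}^{\mathrm{U}(n)}_{t^{1/2}}\boldsymbol{(}M_0;\Q\boldsymbol{)}$, so comparing the two displayed equations and cancelling the common factor $\tfrac{1}{1-t}$ delivers (\ref{halfbetti}). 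There is no real obstacle here; the only point that requires care is the freeness argument for $\mathrm{PO}(n)$ and the appeal to the parity of $n$ to split $\mathrm{O}(n)$, both of which are elementary. The parity hypothesis is genuinely needed: for $n$ even, $-I\in\mathrm{SO}(n)$, the splitting fails, and the simple Künneth identification above would no longer be available.
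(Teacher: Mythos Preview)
Your argument is correct and follows essentially the same route as the paper: both exploit that for $n$ odd the sequence $1\to\{\pm I\}\to\mathrm{O}(n)\to\mathrm{PO}(n)\to 1$ splits (equivalently, $\mathrm{O}(n)\cong\mathrm{SO}(n)\times\{\pm I\}$ with $\mathrm{SO}(n)\cong\mathrm{PO}(n)$), use that $\{\pm I\}$ acts trivially and $\mathrm{PO}(n)$ freely on $M_0^\sigma$ to extract the factor $\tfrac{1}{1-t}$, and then compare with (\ref{UtoPU}) at $t^{1/2}$ via the preceding proposition. Your explicit justification of freeness of the $\mathrm{PO}(n)$-action on $M_0^\sigma$ from freeness of $\mathrm{PU}(n)$ on $M_0$ is a welcome detail that the paper leaves implicit.
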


\begin{proof}
Since $n$ is odd, the short exact sequence $1 \rightarrow \mathrm{O}(1) \rightarrow  \mathrm{O}(n) \rightarrow  \mathrm{PO}(n) \rightarrow 1$ splits, yielding an isomorphism $\mathrm{O}(n) \cong \mathrm{O}(1) \times \mathrm{PO}(n)$, sending $\mathrm{SO}(n)\cong \mathrm{PO}(n)$. Since $\mathrm{O}(1)$ acts trivially and $\mathrm{PO}(n)$ acts freely, we get $$\mathbf{P}^{O(n)}_t\boldsymbol{(}M_0^{\sigma}\boldsymbol{)} = \mathbf{P}_t^{\mathrm{PO}(n)}\boldsymbol{(}M_0^\sigma\boldsymbol{)} \mathbf{P}_t\boldsymbol{(}B\mathrm{O}(1)\boldsymbol{)} = \frac{1}{1-t}\mathbf{P}_t\boldsymbol{(}M_0^\sigma / \mathrm{PO}(n) \boldsymbol{)} = \frac{1}{1-t}\mathbf{P}_t\boldsymbol{(}M^\sigma /\!\!/\mathrm{O}(n) \boldsymbol{)}.$$
Compare with (\ref{UtoPU}).
\end{proof}

\end{example}


The phenomenon revealed in (\ref{chekst}) and (\ref{halfbetti}) is a familiar one in real symplectic geometry (see for example \cite{HHP}). The next example shows that (\ref{halfbetti}) does not always hold for real quotients.

\begin{example}\label{exa8.hkj}
Consider $ (\mathbb{CP}^{1},\omega_{FS},\mathrm{U}(1),\mu_{1},\sigma_{\C P^1},\phi) $ a special case of Example \ref{prodgrassex} and let $(M,\omega_M ,\sigma_M)$ be any real symplectic manifold. Define a new real Hamiltonian system $ (\mathbb{CP}^{1} \times M ,\omega, \mathrm{U}(1),\mu, \sigma ,\phi) $ where $ \omega := \omega_{FS} + \omega_M$ is the product symplectic form, $\sigma:= \sigma_{\C P^1} \times \sigma_M$, and $ \mu(x,y)=\mu_1(x) $. The preimage $\mu^{-1}(0) = S^1 \times M$ where $S^1$ is the equator in $\mathbb{CP}^1$ so the symplectic quotient is $(\mathbb{CP}^{1} \times M)/\!\!/\mathrm{U}(1)\cong M $ and the real quotient is $ (\mathbb{CP}^{1} \times M)^{\sigma}/\!\!/\mathrm{O}(1)\cong M^{\sigma_M}$. Thus whenever $M$ has non-trivial $\Q$-Betti numbers in odd degree we have $\mathbf{P}_t\boldsymbol{(}M^{\sigma_M} ;\Z_2\boldsymbol{)} \neq \mathbf{P}_{t^{1/2}}\boldsymbol{(}M;\Q\boldsymbol{)}$ and (\ref{halfbetti}) is not satisfied.
\end{example}

\end{document}